\numberwithin{equation}{section}
\newcommand{\e}{\varepsilon}
\newcommand{\cE}{\mathcal{E}}
\newcommand{\cQ}{\mathcal{Q}}
\theoremstyle{Case1}
\theoremstyle{Case2}
\newcommand{\NN}{\mathbb{N}}
\newtheorem*{rigprob*}{Rigidity Problem for Uniform Roe Algebras}
\newtheorem*{rigprobcorona*}{Rigidity Problem for Uniform Roe Coronas}
\newcommand{\cst}{\mathrm{C}^*}
\newcommand{\cstar}{$\mathrm{C}^*$}
\newcommand{\cU}{\mathcal{U}}
\newcommand{\bbN}{\mathbb{N}}
\newcommand{\bbC}{\mathbb{C}}
\newcommand{\bbR}{\mathbb{R}}
\newcommand{\cO}{\mathcal{O}}
\newcommand{\ZFC}{\mathrm{ZFC}}
\newcommand{\CH}{\mathrm{CH}}
\newtheorem{thm}{Theorem}
\newtheorem{coro}[thm]{Corollary}
\newtheorem{theorem}{Theorem}[section]
\newtheorem*{theorem*}{Theorem}
\newtheorem{proposition}[theorem]{Proposition}
\newtheorem*{proposition*}{Proposition}
\newtheorem{lemma}[theorem]{Lemma}
\newtheorem*{lemma*}{Lemma}
\newtheorem{corollary}[theorem]{Corollary}
\newtheorem*{corollary*}{Corollar}
\newtheorem*{fact*}{Fact}
\theoremstyle{definition}
\newtheorem{definition}[theorem]{Definition}
\newtheorem*{definition*}{Definition}
\newtheorem{claim}[theorem]{Claim}
\newtheorem*{claim*}{Claim}
\newtheorem*{conjecture*}{Conjecture}
\newtheorem{question}[theorem]{Question}
\theoremstyle{remark}
\newtheorem*{example*}{Example}
\newtheorem*{remark*}{Remark}
\newtheorem*{note*}{Note}
\newtheorem*{question*}{Question}
\newcommand{\norm}[1]{\left\lVert #1 \right\rVert}
\DeclareMathOperator{\jsp}{jsp}
\DeclareMathOperator{\dist}{dist}
\DeclareMathOperator{\supp}{supp}
\DeclareMathOperator{\id}{id}
\newcommand{\cM}{\mathcal M} 
\newcommand{\calK}{\mathcal K} 
\newcommand{\bbD}{\mathbb D} 
\newcommand{\bt}{\mathbf t}
\newcounter{my_enumerate_counter}
\newcommand{\pushcounter}{\setcounter{my_enumerate_counter}{\value{enumi}}}
\newcommand{\popcounter}{\setcounter{enumi}{\value{my_enumerate_counter}}}
\DeclareMathOperator{\End}{End}
\begin{document}

\title{Obstructions to countable saturation in corona algebras}%

\author[I. Farah]{Ilijas Farah}
\address[Ilijas Farah]{Department of Mathematics and Statistics\\
York University\\
4700 Keele Street\\
North York, Ontario\\ Canada, M3J
1P3 and Matemati\v{c}ki Institut SANU, Kneza Mihaila 36, Belgrade 11001, Serbia
(\emph{ORCiD:} \href{https://orcid.org/0000-0001-7703-6931}{0000-0001-7703-6931}).}
\email{ifarah@yorku.ca}
\urladdr{https://ifarah.mathstats.yorku.ca}

\author[A. Vignati]{Alessandro Vignati}
\address[Alessandro Vignati]{Institut de Math\'ematiques de Jussieu (IMJ-PRG)\\
Universit\'e Paris Cit\'e\\
B\^atiment Sophie Germain\\
8 Place Aur\'elie Nemours \\ 75013 Paris, France (\emph{ORCiD:} \href{https://orcid.org/0000-0002-8675-3657}{0000-0002-8675-3657}).}
\email{vignati@imj-prg.fr}
\urladdr{https://www.automorph.net/avignati}

\subjclass[2020]{Primary : 03C66 Secondary : 03C50 46L05}
\keywords{corona \cstar-algebras, countable saturation, continuous model theory, homotopy}
\thanks{}
\date{\today}%
\maketitle
\begin{abstract}
We study the extent of countable saturation for coronas of abelian \cstar-algebras. In particular, we show that the corona algebra of $C_0(\bbR^n)$ is countably saturated if and only if $n=1$.
\end{abstract}
\section{Introduction}

Saturated models are an essential tool in model theory. For example, sufficiently saturated `monster' models are universal and homogeneous, and as such provide the ambient for stability-theoretic considerations. Our motivation for studying saturation of \cstar-algebras (viewed as metric structures, as in \cite{BYBHU}, \cite{FaHaSh:Model2}, or \cite{Muenster}) derives from the following two closely related facts. First, an infinite (noncompact) saturated model of cardinality (or, in the case of continuous model theory, density character $\kappa$) has $2^\kappa$ automorphisms. Second, saturated models of the same cardinality (or of the same density character) are isomorphic if and only if they are elementarily equivalent. 

The corona algebra (see \S\ref{S.Definitions} below for the definition) of any $\sigma$-unital \cstar-algebra satisfies a weakening of countable saturation, called \emph{countable degree-1 saturation} (\cite{FaHa:Countable}, also \cite[\S 15]{Fa:STCstar}). This property implies virtually all saturation-like properties of coronas used in the literature (such as the properties of ultrapowers isolated by Kirchberg in \cite{Kirc:Central}, and many more, see \cite[\S 15.3 and \S 15.4]{Fa:STCstar}, or \cite[\S5.2]{farah2022corona}). However, it is not clear whether countable degree-1 saturation suffices for construction of isomorphisms or automorphisms. Also, the Calkin algebra is not countably quantifier-free saturated, and not even countably homogeneous (\cite[Example 16.1.1]{Fa:STCstar}, \cite[\S 4]{FaHa:Countable}, and \cite{farah2015calkin}). This makes constructing outer automorphisms of the Calkin algebra rather complicated (but not impossible, granted the Continuum Hypothesis ($\CH$)---see \cite{PhWe:Calkin} and \cite[\S 17.1]{Fa:STCstar}). We do not know whether there exists a simple nonunital separable \cstar-algebra whose corona is countably saturated or at least countably homogeneous (see Question~\ref{Q.simple}). 

In \cite{CoFa:Automorphisms} it was conjectured that if $A$ is a separable, nonunital \cstar-algebra then $\CH$ implies that the corona $\cQ(A)$  has $2^{2^{\aleph_0}}$ automorphisms. (The other conjecture of this paper, dual to this one, according to which forcing axioms imply that $\cQ(A)$ has only (appropriately defined) trivial automorphisms, has been confirmed in \cite{vignati2018rigidity}.) 

If $A$ is a separable nonunital \cstar-algebra whose corona is countably saturated, then $A$ clearly satisfies the first conjecture, but very few coronas of separable \cstar-algebras are known to be countably saturated. By \cite[Theorem~1.5]{FaSh:Rigidity}, if $A_n$ is a sequence of unital \cstar-algebras then the corona algebra of $\bigoplus_n A_n$ is countably saturated. This, together with a generalization of the Feferman--Vaught theorem, was used in \cite{ghasemi2016reduced} and \cite{Gha:FDD} (see also \cite{mckenney2018forcing} and \cite{vignati2018rigidity}) to find sequences of pairwise nonisomorphic simple unital separable \cstar-algebras $A_n$ and $B_n$ such that the isomorphism of coronas of $\bigoplus A_n$ and $\bigoplus B_n$ is independent from $\ZFC$; see \cite{farah2022corona} for the current state of the art on the conjectures of \cite{CoFa:Automorphisms}. Under $\CH$, a countably saturated structure of density character $2^{\aleph_0}$ is isomorphic to an ultrapower. Thus countable saturation of reduced products was used in \cite{farah2019between} to prove that the quotient map from $\ell_\infty(A)/c_0(A)$ to a (norm) ultrapower~$A_\cU$ associated with a nonprincipal ultrafilter $\cU$ on $\bbN$ has a right inverse if $\CH$ holds. This implies (in $\ZFC$) a transfer theorem for realization of morphisms with respect to classification functors (\cite[Theorem~A]{farah2019between}).

Our main interest in the present paper is in the saturation of coronas of abelian \cstar-algebras, a study initiated in \cite{eagle2015saturation}. If $X$ is a locally compact\footnote{All spaces in this note are Hausdorff.}, noncompact, space that is an increasing union of compact subsets $K_n$ such that $\sup_n|\partial K_n|<\infty$, where $\partial$ denotes the topological boundary, then the corona algebra of $C_0(X)$ is countably saturated (\cite[Theorem~2.5]{FaSh:Rigidity}). This covers all known examples of countably saturated coronas of abelian \cstar-algebras. The coronas of the \cstar-algebras of the form $A=C_0([0,\infty),B)$ for a separable, unital, and infinite-dimensional \cstar-algebra $B$ need not be countably quantifier-free saturated. (See Question~\ref{Q.Ainfty}, also \cite[Exercise~16.8.36]{Fa:STCstar}; the point of the latter is that the Murray-von Neumann semigroup of $\cQ(A)$ is isomorphic to that of $B$, and it can therefore be countably infinite.) 

Since the corona algebra of $C_0(\bbR)$ is countably saturated, $\CH$ implies that it has~$2^{2^{\aleph_0}}$ automorphisms. By an intricate construction in \cite{vignati2017nontrivial} it was proved that under $\CH$ the corona algebra of $C_0(\mathbb R^n)$, for $n\geq 2$ (as well as many other coronas with similar properties) also has $2^{2^{\aleph_0}}$ automorphisms. The question whether these coronas are countably saturated remained open until now. 

\begin{thm}\label{T0}
Suppose that $D$ is an infinite discrete space and that $Y$ is locally compact, $\sigma$-compact, noncompact, and connected. If $D\times Y$ embeds as a closed subspace in a locally compact space $X$, then the corona algebra of $C_0(X)$ is not countably quantifier-free saturated.
\end{thm}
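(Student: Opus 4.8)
The plan is to reduce to a canonical closed subspace and then refute a single, explicitly built quantifier-free type on the corona of that subspace. Recall that for locally compact $Z$ one has $\cQ(C_0(Z))=C_b(Z)/C_0(Z)\cong C(\beta Z\setminus Z)$, a commutative \cstar-algebra over a compact Hausdorff space. First I would pass to a countable discrete $D_0=\{d_k:k\in\bbN\}\subseteq D$: since a countable quantifier-free type involves only countably many parameters, it suffices to work with $F:=D_0\times Y=\bigsqcup_k Y_k$, with $Y_k\cong Y$, which is closed, $\sigma$-compact and noncompact in $X$. Restriction of bounded functions induces a unital $*$-homomorphism $\pi\colon\cQ(C_0(X))\to\cQ(C_0(F))$ (it sends $C_0(X)$ into $C_0(F)$ because $F$ is closed). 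If a type with parameters supported on $F$ is \emph{realized} over $\cQ(C_0(X))$, then applying $\pi$ realizes it over $\cQ(C_0(F))$; conversely a finite solution over $\cQ(C_0(F))$ can be lifted, since $F\cup\{\infty\}$ is closed in the compact, hence normal, one-point compactification $X^+$, so Tietze extends the finitely many witnessing functions (which I will arrange to live on finitely many copies) back into $\cQ(C_0(X))$. Thus it suffices to produce over $\cQ(C_0(F))$ a countable quantifier-free type that is finitely satisfiable but not realized.

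Next I would normalize the fibre. Because $Y$ is connected, $\sigma$-compact, locally compact and noncompact, there is a continuous \emph{proper} surjection $\rho\colon Y\to[0,\infty)$ (a proper map to $[0,\infty)$ exists by $\sigma$-compactness and local compactness; connectedness forces the image to be an unbounded interval, which I reparametrize). Transporting $\rho$ to each copy and using only functions of the form $\phi\circ\rho$ on $Y_k$ collapses the fibre analysis to that of the ray $[0,\infty)$: the cluster set at spatial infinity of such a function equals that of $\phi$ at $\infty$, hence a closed interval. The qualitative input of connectedness enters precisely here — on a connected fibre no continuous function jumps between two separated value-bands at infinity, so there is \emph{no nontrivial idempotent at spatial infinity} on a single copy. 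This is exactly why the corona can be countably degree-$1$ saturated and yet fail quantifier-free saturation: a witnessing type must genuinely exploit positivity/idempotency, and in the commutative corona $C(\beta F\setminus F)$ this takes the shape of a failure of \emph{countable separation}.

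The witness I would build is such a separation failure. Using $\rho$, fix on each $Y_k$ an escaping ``high'' region $H_k$ and an escaping ``low'' region $L_k$ (preimages under $\rho$ of cofinal sequences of intervals), arranged so that for every fixed $N$ the spatial-infinity traces of $\bigsqcup_{k\le N}H_k$ and of $\bigsqcup_{k\le N}L_k$ in $\beta F\setminus F$ are disjoint, while the traces of the \emph{full} unions $\bigsqcup_k H_k$ and $\bigsqcup_k L_k$ have intersecting closures. The type then asks for a positive contraction $f$ with $\|f\,u_k-u_k\|\le 2^{-k}$ and $\|f\,v_k\|\le 2^{-k}$ for all $k$, where $u_k,v_k$ are positive functions supported in $H_k,L_k$ and escaping to infinity; that is, $f$ must separate the high regions from the low regions across all copies at once. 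Finite satisfiability is the easy direction: for each $N$ only finitely many copies are constrained, their traces are disjoint, and normality of $\beta F\setminus F$ yields a continuous $f_N$ (on the active copies, of the form $\psi\circ\rho$) effecting the separation exactly.

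The main obstacle — the step I expect to carry the weight of the proof — is non-realizability: that no single $f$ meets all conditions. Here connectedness is indispensable. A realizing $f$ would, via functional calculus, produce on the tail of the copies an idempotent-at-infinity separating $\overline{\bigsqcup_k H_k}$ from $\overline{\bigsqcup_k L_k}$; but I will have forced these unions to accumulate onto a common point of $\beta F\setminus F$, precisely because connectedness forbids any copy-by-copy clopen cut from persisting in the limit — the transition regions demanded by the intermediate-value property march to spatial infinity and pile up. Concretely, I expect to reach a contradiction by restricting a hypothetical $f$ along a sequence of fibre points converging in $\beta F$ to that common accumulation point, where $f$ would be forced simultaneously within $2^{-k}$ of $1$ and of $0$. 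Verifying that the two unions genuinely meet at infinity for connected $Y$ — whereas disconnected or compact fibres would permit the separation, matching the $\mathbb{R}$ versus $\mathbb{R}^{n}$ ($n\ge 2$) dichotomy — is the technical crux; the remaining bookkeeping (the choice of $H_k,L_k$, the tolerances $2^{-k}$, and the pullback along $\pi$) is routine.
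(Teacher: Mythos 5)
The fatal gap is in your non-realizability step, and it is not repairable within the framework you set up. Your space $F=\bigsqcup_k Y_k$ is locally compact and $\sigma$-compact (each $Y_k\cong Y$ is), hence normal. The supports $\bigsqcup_k H_k$ and $\bigsqcup_k L_k$ must be disjoint closed subsets of $F$ (at least up to a compact set: if $H_k\cap L_k$ escaped to infinity for some fixed $k$, the two conditions $\|fu_k-u_k\|\le 2^{-k}$ and $\|fv_k\|\le 2^{-k}$ at that single level would already be jointly inconsistent, destroying finite satisfiability). But disjoint closed sets in a normal space are completely separated: Urysohn gives $g\in C_b(F)$ with $0\le g\le 1$, $g\equiv 1$ on $\bigsqcup_k H_k$ and $g\equiv 0$ on $\bigsqcup_k L_k$, and then $\dot g$ realizes your \emph{entire} type exactly. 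Equivalently, disjoint closed sets have disjoint closures in $\beta F$, so the configuration you ask for --- finite sub-unions with disjoint traces but full unions with intersecting traces --- simply does not exist. Connectedness of $Y$ forbids exact clopen cuts at infinity, but it does not forbid continuous Urysohn separation, and your type only demands the latter. There is also a structural reason the approach cannot work: all of your conditions ($f\ge 0$, $\|f\|\le 1$, $\|fu_k-u_k\|\le 2^{-k}$, $\|fv_k\|\le 2^{-k}$) constitute a countable degree-$1$ type, and coronas of $\sigma$-unital \cstar-algebras are countably degree-$1$ saturated (\cite{FaHa:Countable}, cited in the introduction), so any finitely satisfiable type of this shape is automatically realized. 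A witness to the failure of quantifier-free saturation must use conditions that escape degree-$1$ saturation.

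Compare this with the paper's type, which contains exactly the two ingredients yours lacks: a degree-$2$ condition and a norm \emph{lower} bound. The paper works directly on $X$ (this also sidesteps your Tietze step, which is itself flawed: your escaping, oscillating witnesses $u_k$ have no limit at the point at infinity, so they do not extend continuously to $F\cup\{\infty\}$ inside the one-point compactification, and lifting from $\cQ(C_0(F))$ back to $\cQ(C_0(X))$ with norm control off $F$ is never addressed). It builds positive contractions $a_n$ equal to $1$ on $Y_n$ and supported in disjoint neighborhoods $W_n$, sets $a=\sum_n a_n$, and considers the type $x\ge 0$, $\|x\dot a\|=1$, $(x-x^2)\dot a=0$, $x\dot a_n=0$ for all $n$. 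Finite satisfiability: drop one condition $x\dot a_n=0$ and use a bump function $b_n$ over the $n$-th copy. Non-realizability: $\|x\dot a\|=1$ forces a realizer to exceed $2/3$ somewhere far out on some connected $U_n\supseteq Y_n$ with $U_n$ disjoint from the relevant compact set, $x\dot a_n=0$ forces it below $1/2$ somewhere on the noncompact $Y_n\subseteq U_n$, and approximate idempotency $(x-x^2)\dot a=0$ forbids the value $1/2$ far out; the intermediate value property on the connected $U_n$ gives the contradiction. So connectedness is used copy-by-copy, through the intermediate value theorem on a single $U_n$, and not through any accumulation of traces across infinitely many copies, which is where your intuition went astray. (Incidentally, your side claim that a connected fibre admits no nontrivial idempotent at spatial infinity is already false for $Y=\bbR$, whose corona has two components corresponding to the two ends.)
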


\begin{coro}\label{C00}
Let $X$ be a locally compact space in which $\mathbb R^2$ embeds as a closed subspace. Then $C_0(X)$ is not countably saturated. In particular, for every $n\geq 1$ the following are equivalent. \begin{enumerate}
\item The corona algebra of $C_0(\bbR^n)$ is countably saturated. 
\item The corona algebra of $C_0(\bbR^n)$ is countably quantifier-free saturated. 
 \item $n=1$. 
 \end{enumerate}
\end{coro}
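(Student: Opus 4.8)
The plan is to derive everything from Theorem~\ref{T0}, so the only real work is to locate, inside $\bbR^2$, a closed copy of a product $D\times Y$ meeting the hypotheses of that theorem, and then to assemble the stated equivalence by transitivity of closed embeddings.

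First I would prove the opening assertion. Take $D=\Z$, which is infinite and discrete, and $Y=\bbR$, which is locally compact, $\sigma$-compact, noncompact, and connected. The product $D\times Y=\Z\times\bbR$---the union of the vertical lines through the integers---is a closed subspace of $\bbR^2$, since its complement $(\bbR\sm\Z)\times\bbR$ is open. Now if $\bbR^2$ embeds as a closed subspace of a locally compact space $X$, then composing the two closed embeddings $\Z\times\bbR\hookrightarrow\bbR^2\hookrightarrow X$ exhibits $\Z\times\bbR$ as a closed subspace of $X$ (a closed subset of a closed subset is closed). Theorem~\ref{T0} then applies and shows that the corona of $C_0(X)$ is not countably quantifier-free saturated; a fortiori it is not countably saturated, since countable saturation implies countable quantifier-free saturation.

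For the equivalence, fix $n\ge 1$. The implication (1)$\Rightarrow$(2) is immediate from the definitions. For (2)$\Rightarrow$(3) I would argue contrapositively: if $n\ge 2$ then $\bbR^2$ sits inside $\bbR^n$ as the closed subspace $\bbR^2\times\{0\}^{n-2}$, so the first assertion (applied with $X=\bbR^n$) shows the corona of $C_0(\bbR^n)$ is not countably quantifier-free saturated, i.e.\ (2) fails. Finally, for (3)$\Rightarrow$(1), when $n=1$ the space $\bbR$ is the increasing union of the compact intervals $K_m=[-m,m]$, whose boundaries satisfy $|\partial K_m|=2$, so $\sup_m|\partial K_m|<\infty$ and \cite[Theorem~2.5]{FaSh:Rigidity} yields that the corona of $C_0(\bbR)$ is countably saturated. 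Chaining these three implications closes the cycle.

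Since Theorem~\ref{T0} carries the entire analytic and model-theoretic burden, I do not expect a genuine obstacle here; the steps are all routine point-set topology. The only points demanding (trivial) care are verifying that $\bbR$ satisfies all four conditions imposed on $Y$ and that each embedding in sight really is closed---e.g.\ that $\Z\times\bbR$ and $\bbR^2\times\{0\}^{n-2}$ are closed, and that closedness is preserved under composition. If one instead wanted a self-contained argument avoiding the appeal to \cite[Theorem~2.5]{FaSh:Rigidity} for (3)$\Rightarrow$(1), that would require reproving countable saturation of $\cQ(C_0(\bbR))$, which is where all the difficulty would actually lie.
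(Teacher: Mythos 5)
Your proof is correct and takes essentially the same route as the paper: the paper likewise reduces to Theorem~\ref{T0} via the closed subspace $\bbN\times\bbR\subseteq\bbR^2\subseteq\bbR^n$ (your choice of $\Z\times\bbR$ is an immaterial variant) and cites \cite[Theorem~2.5]{FaSh:Rigidity} for countable saturation of the corona of $C_0(\bbR)$ in the case $n=1$. The only difference is that you spell out the routine closedness verifications and the cycle of implications slightly more explicitly than the paper does.
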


The question whether, for \cstar-algebras, countable saturation is equivalent to countable quantifier-free saturation was raised in \cite[Question~6.5]{eagle2015saturation}. While Corollary~\ref{C00} gives some support for the positive answer, by Proposition~\ref{P.QFsaturated} below, the answer is in general negative. 

%\end{corollary}
%\begin{coro}\label{C0}
%Let $X$ be a locally compact space in which $\mathbb R^2$ embeds as a closed subspace. Then $C_0(X)$ is not countably saturated.
%\end{coro}

The paper is organised as follows. In \S\ref{S.elementary} we give a rather elementary topological proof of Theorem~\ref{T0}, and in \S\ref{S.limiting} we state some corollaries and limiting examples. In \S\ref{S.homotopy} we introduce the notion of \emph{definable homotopy} and give another proof of the same theorem. This, a bit more involved proof, is more likely to be adaptable to a more general case. In \S\ref{S.redprod} we show that the corona algebra of a direct sum $\bigoplus_n A_n$ of a sequence of nonunital \cstar-algebras is not necessarily countably saturated, contrasting the result of \cite[Theorem~1.5]{FaSh:Rigidity} mentioned earlier. In \S\ref{S.Nonabel} we answer a question of Eagle and the second author (\cite[Question~6.5]{eagle2015saturation}), by finding a family of \cstar-algebras that are countably quantifier-free saturated but not countably saturated. 

We assume familiarity with logic of metric structures, as applied to \cstar-algebras (\cite{Muenster}; the relevant definitions of a condition, type, and saturation can be found in \cite[\S 16]{Fa:STCstar}).

\subsection{Multipliers and coronas} \label{S.Definitions} 
If $A$ is a \cstar-algebra, its multiplier algebra $\cM(A)$ is the unital \cstar-algebra such that when a unital \cstar-algebra $B$ contains $A$ as an essential ideal\footnote{An ideal $A\subseteq B$ is \emph{essential} if $A^\perp=\{b\in B\mid bA=Ab=0\}$ is $0$}, then the identity map on $A$ extends uniquely to a $^*$-homomorphism from $B$ to $\cM(A)$ (see \cite[II.7.3.1]{Black:Operator} or \cite[Corollary~13.2.3]{Fa:STCstar}). This is a noncommutative generalisation of the \v{C}ech--Stone compactification; in fact, if $A=C_0(X)$ for a locally compact space $X$ then $\cM(A)\cong C(\beta X)$. The corona algebra of $A$ is the quotient $\cQ(A)=\cM(A)/A$, and $\pi$ denotes the canonical quotient map. 

\subsection*{Acknowledgments} The results of \S\ref{S.homotopy} were obtained during IF's visit to Universit\'e Paris Diderot (currently Universit\'e Paris Cit\'e). IF is partially supported by NSERC. AV is supported by an Emergence en Recherche grant from IdeX-Universit\'e Paris Cit\'e and partially by the ANR project AGRUME (ANR-17-CE40-0026).

We are grateful to Logan Hoehn (see the last paragraph of \S\ref{S.limiting}) and to S\o ren Eilers for turning our attention to \cite{sorensen2012characterization} and \cite{enders2019almost}. 

\section{An elementary proof}\label{S.elementary}

The proof of Theorem~\ref{T1} is based on an idea of the proof of \cite[Theorem~2.2]{FaSh:Rigidity}. 

\begin{theorem}\label{T1}
Suppose that $X$ is a locally compact and $\sigma$-compact space such that there are infinitely many closed disjoint connected subsets $Y_n$ of~$X$ with none of them compact and such that each compact subset of $X$ intersects only finitely many $Y_n$'s. Then the corona algebra of $C_0(X)$ is not countably quantifier-free saturated.
\end{theorem}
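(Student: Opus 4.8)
The plan is to pass to the topological side, using $\cM(C_0(X))\cong C(\beta X)$ and $\cQ(C_0(X))\cong C(\beta X\setminus X)$, and to exhibit a single countable quantifier-free type in one variable $x$ (with parameters from the corona) that is finitely satisfiable but not realized. Writing $X^\ast=\beta X\setminus X$ for the corona space, the type will be a \emph{separation type}: fixing two families of positive contractions $a_n=\pi(\alpha_n)$ and $b_n=\pi(\beta_n)$ in $\cQ(C_0(X))$, it consists of $0\le x\le 1$ together with $\|xa_n-a_n\|=0$ and $\|xb_n\|=0$ for all $n$. A realization $x\in C(X^\ast)$ is exactly a continuous $[0,1]$-valued function that is $1$ on the open set $U=\bigcup_n\{a_n>0\}$ and $0$ on $V=\bigcup_n\{b_n>0\}$; thus the type is realized iff $\overline U$ and $\overline V$ are disjoint in $X^\ast$, and is finitely satisfiable iff any finitely many of the chosen cozero sets can be separated by such a function. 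The whole point will be to build $U$ and $V$ from the sets $Y_n$ so that every finite subfamily is separated but $\overline U\cap\overline V\neq\emptyset$.

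For the construction I would first record two soft preliminaries. Since $X$ is locally compact, $\sigma$-compact and each compact set meets only finitely many $Y_n$, the family $\{Y_n\}$ is discrete; as $X$ is then paracompact (hence collectionwise normal) I can fix pairwise disjoint open sets $W_n\supseteq Y_n$ and a proper continuous exhaustion $\Phi\colon X\to[0,\infty)$. On each $Y_n$ the restriction $\Phi\rs Y_n$ is unbounded, so by connectedness (the intermediate value property along $Y_n$) I can choose, for each large $n$, a connected subset of $Y_n$ on which $\Phi$ runs through $[n,n+1]$, and inside it two noncompact tails $A_n,B_n\subseteq W_n$ going to infinity in $Y_n$, with $A_n$ carrying the value $1$ and $B_n$ the value $0$. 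I then let $\alpha_n,\beta_n\in C_b(X)$ be $[0,1]$-valued, supported in $W_n$, with $\{\alpha_n>0\}$ a neighbourhood of $A_n$ and $\{\beta_n>0\}$ a neighbourhood of $B_n$, and set $U=\bigcup_n\{\alpha_n>0\}$, $V=\bigcup_n\{\beta_n>0\}$. Finite satisfiability is then the easy direction: a finite fragment involves finitely many pieces, which are separated by the disjointness of the $W_n$ (local finiteness) and, within each piece, by pushing the $A_n$- and $B_n$-tails apart; a Urysohn/Tietze function on $X^\ast$ realizes the fragment exactly.

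The hard part—and the only place where connectedness and the infinitude of the $Y_n$ are genuinely used—is to prove $\overline U\cap\overline V\neq\emptyset$ in $X^\ast$, i.e.\ that $U$ and $V$ are \emph{not} completely separated at infinity; this is where I expect the main difficulty. Along any single piece the transition can be pushed out to infinity and sharpened, so a decoupled, piece-by-piece version of the type is always realizable; indeed $\bbR$ admits at most two disjoint connected noncompact closed sets, which is precisely why $\cQ(C_0(\bbR))$ is countably saturated and why the mere connectedness of one $Y_n$ cannot help. The mechanism must therefore be global and diagonal: since the tails $A_n$ and $B_n$ sit on infinitely many distinct pieces but at the \emph{same} $\Phi$-scale $[n,n+1]$, a diagonal subsequence accumulates at a common corona point $\zeta\in X^\ast$, and I will arrange the tails so that $\zeta\in\overline U\cap\overline V$. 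Connectedness of each $Y_n$ forces any candidate separating function to take the value $\tfrac12$ somewhere on $Y_n$ between $A_n$ and $B_n$, and the key estimate is that these crossings cannot be pushed past $\zeta$, since $\zeta$ is their across-pieces limit rather than a within-piece limit on any single $Y_n$. Making this trapping precise is the crux; granting it, any realization of the full type would be a continuous function on $X^\ast$ forced to equal both $0$ and $1$ at $\zeta$, which is absurd, so the type is unrealized and $\cQ(C_0(X))$ is not countably quantifier-free saturated.
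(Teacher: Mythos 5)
Your reduction of the problem to showing $\overline U\cap\overline V\neq\emptyset$ in $X^*=\beta X\setminus X$ is correct as far as it goes, but the ``crux'' you defer is not merely unproven --- it is provably impossible in the setup you impose, so the whole strategy collapses. The point is that every condition of your type, $xa_n=a_n$ and $xb_n=0$, only constrains $x$ on the single piece $W_n$, because you place $\supp\alpha_n$ and $\supp\beta_n$ inside the pairwise disjoint, locally finite family $\{W_n\}$. Finite satisfiability (which you need anyway) gives in particular, for each single $n$, some $g_n\in C_b(X)$ with $0\le g_n\le 1$, $(1-g_n)\alpha_n\in C_0(X)$ and $g_n\beta_n\in C_0(X)$. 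Multiplying $g_n$ by a Urysohn bump that equals $1$ on $\supp\alpha_n\cup\supp\beta_n$ and is supported in $W_n$ (arrange nested open sets inside $W_n$, as one always can in a locally compact $\sigma$-compact space), you get $f_n$ supported in $W_n$ with the same two properties. By local finiteness, $f=\sum_n f_n$ is a continuous $[0,1]$-valued function, and $\pi(f)$ realizes the \emph{entire} type, since each condition only sees the piece where $f=f_n$. So the full type is always realized once its singleton fragments are, there is no diagonal corona point $\zeta$, and in fact $\overline U\cap\overline V=\emptyset$ automatically. Your construction also has an internal inconsistency that signals the same problem: tails at $\Phi$-scale $[n,n+1]$ are compact (properness of $\Phi$), so the corresponding $\pi(\alpha_n)$ would vanish in the corona and the conditions would be vacuous; while genuinely noncompact tails escape every scale, which is exactly what re-decouples the conditions piece by piece and lets the gluing above go through.

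What is missing is a condition that couples infinitely many pieces simultaneously, and this is precisely how the paper's proof differs from yours. The paper's type is not a separation type: it uses the single global parameter $\dot a$ where $a=\sum_n a_n$ (a legitimate element of $C_b(X)$ by local finiteness), with $a_n=1$ on $Y_n$, $a_n\ge 2/3$ on a connected open $U_n\supseteq Y_n$, and $a_n\le 2/3$ off $U_n$; the conditions are $\|x\dot a\|=1$, $x\ge 0$, $(x-x^2)\dot a=0$, and $x\dot a_n=0$ for all $n$. If $\dot c$ realized this, then $(x-x^2)\dot a=0$ forbids $c$ from taking the value $1/2$ on $\overline{\bigcup_n U_n}$ outside some compact $K_m$; the norm condition $\|x\dot a\|=1$ is global and forces $c>2/3$ at some point of some $U_n$ with $U_n\cap K_m=\emptyset$; the condition $x\dot a_n=0$, together with noncompactness of $Y_n$, forces $c<1/2$ somewhere on $Y_n\subseteq U_n$; and connectedness of $U_n$ plus the intermediate value theorem produces a point of $U_n$ where $c=1/2$ --- a contradiction. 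Your instinct that connectedness must force a crossing of the value $1/2$ is exactly the mechanism of the paper, but it is made to bite by the two global conditions $\|x\dot a\|=1$ and $(x-x^2)\dot a=0$ (one element $\dot a$ summing over all pieces, and an approximate-projection constraint relative to it), not by any family of piecewise separation conditions, which, as shown above, can never witness failure of quantifier-free saturation in this locally finite situation.
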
 

\begin{proof} Since $X$ has a noncompact closed subset, it is not compact either. 
Let $C_b(X)$ denote the \cstar-algebra of bounded continuous complex-valued functions on $X$. We can identify $\cQ(C_0(X))$ with $C_b(X)/C_0(X)$. Let $\pi\colon C_b(X)\to \cQ(C_0(X))$ denote the quotient map. For brevity, for $c\in C_b(X)$ we write $\dot c=\pi(c)$.

Write $X$ as an increasing union of compact subspaces, $X_n=\bigcup_n K_n$. First find open sets $U_n$, $V_n$ and $W_n$ such that for all $n$ the following conditions hold. 
\begin{enumerate}
\item\label{IT1.1} $Y_n\subseteq U_n\subseteq\overline U_n\subseteq V_n\subseteq\overline V_n\subseteq W_n$,
\item \label{IT1.2} all the sets $\overline W_n$ are disjoint, 
\item \label{IT1.2.5} $K_m\cap W_n=\emptyset$ if $K_m\cap Y_n=\emptyset$ for all $m$, and
\item \label{IT1.3} the sets $U_n$ are connected.
%\item \label{IT1.4} $K_m\cap V_n=\emptyset$ if $m\leq n$, and 
\end{enumerate}
For this, first use Uryshon's Lemma to obtain $U_n'$, $V_n'$ and $W_n$ satisfying \eqref{IT1.1}, \eqref{IT1.2}, and \eqref{IT1.2.5}. Then let $U_n$ be the connected component of $U_n'$ that contains~$Y_n$. 

We will need positive contractions $a_n\in C_b(X)$ satisfying the following. 
\begin{enumerate}[resume]
\item $a_n(t)=1$ for $t\in Y_n$,
\item $a_n(t)\geq \frac 23$ for $t\in \overline{U_n}$, 
\item $a_n(t)\leq \frac 23$ for $t\in X\setminus U_n$, and 
\item $\supp(a_n)\subseteq V_n$. 
\end{enumerate}
In order to find $a_n$, using the Tietze extension theorem first fix a continuous $a_n^0\colon X\to [0,\frac 23]$ such that $\supp(a_n^0)\subseteq V_n$ and $a_n^0(t)=\frac 23$ for $t\in \overline{U_n}$. Then find continuous $a_n^1\colon X\to [0,\frac 13]$ such that $\supp(a_n^1)\subseteq U_n$ and $a_n^1(t)=\frac 13$ for all $t\in Y_n$ and let $a_n=a_n^0+a_n^1$. 

Since on every compact subset of $X$ at most finitely many of the summands are nonempty and all the $a_i$'s are orthogonal, $a=\sum_j a_j$ is in $C_b(X)$. Let $b_n\in C_b(X)$ be a positive contraction such that $b_n(t)=1$ for all $t\in \overline V_n$ and $\supp(b_n)\subseteq W_n$. Notice that $a_n b_n=a_n$ and $a_m b_n=0$ for all $n$ and all $m\neq n$. In particular $b_na=b_n^2a=a_n$ for all $n$.

Consider the type $\bt(x)$ over $\cQ(X)$ with the conditions 
\begin{enumerate}
\item [] $\|x\dot a\|=1$, $x\geq 0$, $(x-x^2)\dot a=0$, and $x\dot a_n=0$ for all $n$. 
\end{enumerate}
It will suffice to show that $\bt(x)$ is consistent, but not realized in $\cQ(X)$. 

If $\bt_0$ is a finite subset of $\bt$, fix $n$ such that $\dot a_n$ does not appear in $\bt_0$. Then~$\dot b_n$ realizes $\bt_0$ (it realizes every condition of $\bt$ except $x \dot a_n=0$), therefore $\bt$ is consistent. 

It remains to prove that $\bt$ is not realized in $\cQ(X)$. Assume otherwise, fix $c\in C_b(X)$ such that $\dot c$ realizes $\bt$. Since $(\dot c-\dot c^2)\dot a=0$, we can fix $m$ such that $((c-c^2)a)(t)<\frac 16$ for all $t\in X\setminus K_m$. Fix for a moment $t\in \overline {\bigcup_n U_n}\setminus K_m$. Then $\frac 16>((c-c^2)a)(t)\geq \frac 23 (c-c^2)(t)$. Therefore $(c-c^2)(t)<1/4$ and so
\begin{enumerate}
\item [] $c(t)\neq 1/2$ for all $t\in \overline {\bigcup_n U_n}\setminus K_m$. 
\end{enumerate}
 In addition, by the assumption that each $K_m$ intersects only finitely many~$Y_n$ nontrivially and \eqref{IT1.2.5}, the set 
\[
F=\{n\mid U_n\cap K_m\neq \emptyset\}
\]
is finite. Since $\dot c \dot a_n=0$ for all $n\in F$, there is $m'\geq m$ such that for all $n\in F$ we have $(c a_n)(t)<1/2$ for all $t\in X\setminus K_{m'}$. Since $\|\dot c\dot a\|=1$, we can find $s_0\in X\setminus K_{m'}$ such that $\frac{2}{3}<(ca)(s_0)$. This implies $\frac 23 < c(s_0)$, and since $\{s\mid ca(s)> \frac{2}{3}\}\subseteq\{s\mid a(s)> \frac{2}{3}\}\subseteq\bigcup U_n$ we have $s_0\in \bigcup_n U_n$. Fix $n$ such that $s_0\in U_n$. Then $n\notin F$ and therefore $U_n\cap K_{m}=\emptyset$. 

Since $\dot c\dot a_n=0$, some $s_1\in Y_n$ satisfies $\frac 12>c(s_1)$; if not, $ca_n(s)\geq \frac{1}{2}$ for all $s\in Y_n$, and therefore, as $Y_n$ is not compact, $\norm{ca_n}\geq\frac{1}{2}$. But $Y_n\subseteq U_n$, hence $c$ attains values both above and below~$\frac 12$ on the connected space $U_n=U_n\setminus K_m$. Hence some $t\in U_n$ satisfies $c(t)=\frac 12$; contradiction. 
\end{proof}

\section{Corollaries and a limiting example} \label{S.limiting}

Theorem~\ref{T1} resolves the question of countable saturation for many, but not all, coronas of abelian \cstar-algebras. 

\begin{proof}[Proof of Theorem~\ref{T0}]
Suppose that $D=\{d_n\}$ is an infinite discrete set, that~$Y$ is locally compact, $\sigma$-compact, noncompact, and connected, and that $D\times Y$ embeds as a closed subspace in a locally compact space $X$. By discarding the extras, we can assume $D$ is countable and enumerate it as $d_n$, for $n\in \bbN$. Let $Y_n=\{d_n\}\times Y$. To check the assumptions of Theorem~\ref{T1}, suppose $K$ is a compact subset of $X$. Then $K\cap (D\times Y)$ is compact, and therefore $K\cap Y_n$ is nonempty for at most finitely many $n$. By Theorem~\ref{T1}, the corona algebra of $C_0(X)$ is not countably quantifier-free saturated. 
\end{proof}

\begin{proof}[Proof of Corollary~\ref{C00}]
The corona algebra of $C_0(\bbR)$ is countably saturated by \cite[Theorem~2.5]{FaSh:Rigidity}. 

It remains to prove that the corona algebra of $C_0(\bbR^n)$ for $n\geq 2$ is not countably quantifier-free saturated. Since $\bbN\times \bbR$ is a closed subset of $\bbR^2$ (and in turn of $\bbR^n$ for all $n\geq 2$), if a locally compact space $X$ has $\bbR^2$ as a closed subspace then Theorem~\ref{T0} implies that the corona of $C_0(X)$ is not countably quantifier-free saturated.
\end{proof}

%\begin{corollary}\label{C0}
%Suppose that $X$ is an infinite discrete set, and that $Y$ is locally compact, $\sigma$-compact, noncompact, and connected. Suppose that $X\times Y$ embeds as a closed subspace in a locally compact space $Z$. Then the corona algebra of $C_0(Z)$ is not countably quantifier-free saturated. \qed 
%\end{corollary}

The proof of Corollary~\ref{C00} may seem to suggest that if $X$ and $X'$ are locally compact, noncompact spaces such that $X$ is a closed subspace of $X'$ and the corona algebra $C_0(X)$ is not countably quantifier-free saturated, then the corona algebra of $C_0(X')$ is not countably quantifier-free saturated. It is however not clear whether this is true in general. 

If $Y$ is sufficiently connected at infinity, the space $D$ in Theorem~\ref{T0} can be assumed to be locally compact instead of discrete.

\begin{corollary}\label{C1}
Suppose that $D$ is an infinite locally compact space and that $Y$ is locally compact and noncompact. Suppose furthermore that $Y$ can be written as an increasing union of compact sets $K_n$ such that $\overline{Y\setminus K_n}$ is connected for all $n$, and that $D\times Y$ embeds as a closed subspace in a locally compact space $X$. Then the corona algebra of $C_0(X)$ is not countably quantifier-free saturated. 
\end{corollary}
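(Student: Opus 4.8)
The plan is to deduce the corollary from Theorem~\ref{T1}, exactly as Theorem~\ref{T0} was, by exhibiting infinitely many pairwise disjoint, closed, connected, noncompact subsets $Y_n\subseteq X$ such that each compact subset of $X$ meets only finitely many of them. Since $D$ is infinite and Hausdorff, fix a sequence of distinct points $d_n\in D$. The new difficulty, absent when $D$ was discrete, is that the $d_n$ may accumulate in $D$ (indeed if $D$ is compact they must), so the $D$-coordinate alone no longer forces a compact set to meet only finitely many slices; this is exactly what the connectedness-at-infinity hypothesis on $Y$ is designed to repair, by pushing the sets $Y_n$ out to infinity in the $Y$-direction.

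Concretely, I would set $Y_n=\{d_n\}\times\overline{Y\setminus K_n}$, viewed inside the closed subspace $D\times Y\subseteq X$. Each $Y_n$ is closed in $D\times Y$, hence in $X$; connected, since $\overline{Y\setminus K_n}$ is connected by hypothesis and $\{d_n\}$ is a point; noncompact, since $\overline{Y\setminus K_n}$ is noncompact (otherwise $Y=K_n\cup\overline{Y\setminus K_n}$ would be compact); and the $Y_n$ are pairwise disjoint because the $d_n$ are distinct. For the finiteness condition, let $K\subseteq X$ be compact. Then $K\cap(D\times Y)$ is compact, so its image $C$ under the projection to $Y$ is a compact subset of $Y$, and $Y_n\cap K\neq\emptyset$ forces $\overline{Y\setminus K_n}\cap C\neq\emptyset$. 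The sets $\overline{Y\setminus K_n}\cap C$ are compact and decrease with $n$, so it suffices to know that their intersection is empty; since
\[
\bigcap_n\overline{Y\setminus K_n}=Y\setminus\bigcup_n\intr K_n,
\]
this amounts to the requirement that the interiors $\intr K_n$ cover $Y$. Granting this, compactness of $C$ yields an $N$ with $\overline{Y\setminus K_n}\cap C=\emptyset$ for all $n\geq N$, so $K$ meets only finitely many $Y_n$, and Theorem~\ref{T1} applies.

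The main obstacle is therefore purely point-set topological: the given exhaustion need not satisfy $\bigcup_n\intr K_n=Y$ (equivalently $K_n\subseteq\intr K_{n+1}$ after passing to a subsequence). I would handle this with a preliminary lemma asserting that $Y$ admits a regular exhaustion $\{L_n\}$, with $L_n\subseteq\intr L_{n+1}$ and $\bigcup_n\intr L_n=Y$, for which each $\overline{Y\setminus L_n}$ is still connected; replacing $K_n$ by $L_n$ then makes the argument above go through. Starting from any regular exhaustion of the $\sigma$-compact, locally compact space $Y$, one obtains such $L_n$ by filling in the relatively compact complementary components, and the point is that the connectedness hypothesis on the original $\overline{Y\setminus K_n}$ guarantees that $Y$ has a single end, so that this hole-filling leaves the complement connected rather than splitting it. Verifying that the filled sets are compact and that the complement remains connected is the delicate step, and the one most likely to require the full strength of the hypothesis; once it is in place, the reduction to Theorem~\ref{T1} is routine and parallels the proof of Theorem~\ref{T0}.
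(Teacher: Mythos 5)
Your construction is exactly the paper's: the paper also sets $Y_n=\{d_n\}\times\overline{Y\setminus K_n}$ (choosing the $d_n$ to form an infinite discrete subset of $D$, though for this argument distinctness is all that is really used) and reduces to Theorem~\ref{T1} via the same verifications of closedness, connectedness, disjointness, and noncompactness, the last by your observation that otherwise $Y=K_n\cup\overline{Y\setminus K_n}$ would be compact. The divergence is in the finiteness condition. The paper disposes of it in one line: $K\cap(D\times Y)$ is compact, ``therefore included in some $D\times K_n$'' --- that is, it reads the hypothesis as supplying an exhaustion that absorbs compact sets, which as you correctly note is equivalent to $\bigcup_n\intr K_n=Y$, via the identity $\bigcap_n\overline{Y\setminus K_n}=Y\setminus\bigcup_n\intr K_n$. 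Your worry is substantive, not pedantic: a bare increasing union of compacta satisfying the stated connectedness condition need not absorb compacta. For instance, take $Y\subseteq\bbR^2$ to be the union of the base ray $[0,\infty)\times\{0\}$, the horizontal lines $[0,\infty)\times\{1/m\}$, and the vertical segments $\{k\}\times[0,1]$ for integers $k,m\geq 1$, with $K_n=\{(x,t)\in Y: x\le n,\ t=0\text{ or }t\ge 1/n\}$; each $\overline{Y\setminus K_n}$ is connected (every piece attaches to the base ray or to a full vertical segment $\{k\}\times[0,1]$ with $k>n$), yet each contains the entire base ray, so $\bigcap_n\overline{Y\setminus K_n}\neq\emptyset$ and, e.g., the compact set $\{1\}\times[0,1]$ lies in no $K_n$. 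On such an exhaustion the paper's one-line step, and its family $Y_n$, genuinely fail, so some repair along your lines (or the tacit convention that ``increasing union of compact sets'' means a proper exhaustion) is needed.

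That said, as a proof your proposal is incomplete precisely where you flag it. The regularization lemma --- that every $Y$ as in the statement admits an exhaustion $L_n\subseteq\intr L_{n+1}$ with $\bigcup_n\intr L_n=Y$ and $\overline{Y\setminus L_n}$ still connected --- is asserted with a sketch (hole-filling, single end) but not proved, and the sketch meets real obstacles: $Y$ need not be locally connected (the grid above is not, at base points), so the components of $Y\setminus L_n$ need not be open and the union of the relatively compact ones need not have compact closure; moreover the ``single end'' property must itself be extracted from connectedness of the $\overline{Y\setminus K_n}$ for the given, possibly irregular, exhaustion, which is exactly the kind of statement the counterexample shows to be delicate. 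Under the standard reading of the hypothesis (every compact subset of $Y$ is contained in some $K_n$, which is how the paper uses it), your extra lemma is unnecessary and your argument collapses to the paper's proof verbatim; under the literal reading, the lemma is the whole content, and it remains open in your write-up. That unproven lemma is the gap.
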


\begin{proof} 
Choose an infinite discrete subset $\{d_n\mid n\in \bbN\}$ of $D$. Then the subspaces $Y_n=\{d_n\}\times (\overline {Y\setminus K_n})$ of $X$ are closed, connected, and disjoint, and $\bigcup_n Y_n$ is a closed subspace of $X$. If $K\subset X$ is compact, then so is $K\cap (D\times Y)$ and therefore the latter set is included in some $D\times K_n$. Hence $K$ intersects at most finitely many $Y_n$ nontrivially. By Theorem~\ref{T1}, the corona algebra of $C_0(X)$ is not countably quantifier-free saturated. 
\end{proof}

The conclusion of Corollary~\ref{C1} may fail if $Y$ is not assumed to be $\sigma$-compact. This is Corollary~\ref{Ex.notsigmacompact}, an easy consequence of the following. 

\begin{proposition}\label{P.any} There is a locally compact connected space $Y$ such that $B\cong \mathcal Q(C_0(Y,B))$ for every unital \cstar-algebra $B$. 
\end{proposition}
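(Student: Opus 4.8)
The plan is to take $Y$ to be the \emph{open long ray} $\mathbb L=[0,\omega_1)\times[0,1)$, endowed with the order topology coming from the lexicographic order. This space is a linear continuum, hence connected; it is locally compact, since every point has a neighbourhood that is a closed order-interval, and such intervals are compact; and it is not $\sigma$-compact, because every compact subset is bounded (contained in a proper initial segment $[\mathbf 0,y]$), while a countable union of bounded sets is again bounded, as the supremum of countably many countable ordinals is a countable ordinal. The reason $\mathbb L$ is the right space is the classical fact that it is ``pseudocompact at infinity'': I would first prove the lemma that every continuous map $f\colon\mathbb L\to M$ into a metric space $M$ is \emph{eventually constant}.

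For this lemma I would argue in two steps. First, $f$ has a limit at infinity: if the set of cluster values of $f$ along the tail filter contained two distinct points $p\neq q$, one could build an increasing $\omega$-sequence $y_0<y_1<\cdots$ in $\mathbb L$ with $f(y_{2k})\to p$ and $f(y_{2k+1})\to q$; its supremum $\gamma$ exists in $\mathbb L$ and lies below the top, so continuity at $\gamma$ would force $f(\gamma)$ to be within any prescribed distance of both $p$ and $q$, a contradiction. Hence $\lim_{y\to\infty}f(y)=p$ for a unique $p\in M$. Second, for each $n$ the set $\{y: d(f(y),p)\ge 1/n\}$ is bounded; since $\{y: f(y)\neq p\}$ is the union of these countably many bounded sets it is itself bounded, so $f$ equals $p$ on a tail of $\mathbb L$.

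With the lemma in hand, fix a unital \cstar-algebra $B$. As in the scalar identification $\cQ(C_0(X))=C_b(X)/C_0(X)$ used in the proof of Theorem~\ref{T1}, I would identify the multiplier algebra $\cM(C_0(\mathbb L,B))$ with $C_b(\mathbb L,B)$, the bounded norm-continuous $B$-valued functions (the standard description of multipliers of $C_0(Y)\otimes B$ for unital $B$, where the strict topology on $M(B)=B$ agrees with the norm topology on bounded sets). Applying the lemma with $M=B$, every $f\in C_b(\mathbb L,B)$ is eventually equal to a unique value $f_\infty\in B$, and $f\mapsto f_\infty$ is a unital $^*$-homomorphism onto $B$ (surjectivity is witnessed by the constant functions). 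Its kernel consists exactly of the functions vanishing on a tail; since initial segments are compact and tails are noncompact, these are precisely the elements of $C_0(\mathbb L,B)$. Therefore $\cQ(C_0(\mathbb L,B))=C_b(\mathbb L,B)/C_0(\mathbb L,B)\cong B$, with $Y=\mathbb L$ chosen independently of $B$.

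The step I expect to require the most care is the identification $\cM(C_0(\mathbb L,B))\cong C_b(\mathbb L,B)$: for general coefficient algebras the multiplier algebra consists of functions continuous only in the strict topology of $M(B)$, and one must genuinely use unitality of $B$ to reduce to honest norm-continuous functions. The eventual-constancy lemma, by contrast, is a soft ordinal argument, and the only subtlety there is checking that it survives the passage from scalar targets to an arbitrary, possibly nonseparable, metric space $B$; the cluster-value argument above is phrased so as to make no separability assumption on $B$.
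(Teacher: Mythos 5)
Your overall route is the paper's route: the long ray as $Y$, the lemma that continuous maps from $Y$ into a metric space are eventually constant, the identification $\cM(C_0(Y,B))\cong C_b(Y,B)$ for unital $B$ (this is exactly the easy case of Akemann--Pedersen--Tomiyama that the paper cites, and your strict-topology justification is correct --- contrary to your own assessment, this is \emph{not} where the difficulty lies), and the surjective diagonal map with kernel analysis. The paper works with $\aleph_1$ first and then passes to the long ray, but that is a cosmetic difference.

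The genuine gap is in your proof of the eventual-constancy lemma, precisely in the sentence ``Hence $\lim_{y\to\infty}f(y)=p$ for a unique $p\in M$.'' What your interleaving argument shows is that the tail filter has \emph{at most one} cluster value; it does not show that it has one, nor that a unique cluster value is a limit. For a compact (or totally bounded) target both inferences are automatic, but here $M=B$ is typically an infinite-dimensional \cstar-algebra, where bounded nets need not cluster at all: nothing you prove rules out that the cluster set $\bigcap_{\alpha}\overline{f([\alpha,\omega_1))}$ is empty, and in that case there is no $p$ to feed into your second step (whose claim that $\{y: d(f(y),p)\ge 1/n\}$ is bounded is just a restatement of $f(y)\to p$, the very thing to be proved). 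Separability, which you flag, is a red herring; compactness is the issue. The repair is the paper's argument: set $\varepsilon(\alpha)=\diam f([\alpha,\omega_1))$, a nonincreasing function of $\alpha$. If $\inf_\alpha\varepsilon(\alpha)=0$, pick $\alpha_n$ with $\varepsilon(\alpha_n)<1/n$ and let $\alpha^\ast=\sup_n\alpha_n<\omega_1$; then $\varepsilon(\alpha^\ast)=0$ and $f$ is constant on a tail. If instead $\varepsilon(\alpha)\ge\delta>0$ for all $\alpha$, every tail contains pairs of points whose $f$-values are $>\delta/2$ apart, and interleaving such pairs into an increasing $\omega$-sequence contradicts continuity of $f$ at the supremum --- the same continuity-at-the-sup trick you already use, but applied to the correct dichotomy. (Your one-cluster-value case, had the cluster set been nonempty, could also be closed by interleaving points where $f$ is $\varepsilon$-close to $p$ with points of a cofinal set where $d(f(y),p)\ge 1/n$; but the empty-cluster-set case genuinely requires the diameter argument or something like it.) Everything after the lemma is correct as you wrote it.
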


\begin{proof} Ignoring the connectedness requirement for a moment, consider $\aleph_1$ with the order topology. Then the multiplier algebra of $A=C_0(\aleph_1, B)$ can be identified with $C_b(\aleph_1, B)$ (the \cstar-algebra of bounded continuous functions from $\aleph_1$ into $B$); this is a very easy case of \cite[Theorem~3.3]{akemann1973multipliers}. Every continuous function $f$ from $\aleph_1$ into a metric space is eventually constant. We include a proof of this well-known fact. For $\alpha<\aleph_1$ let $\varepsilon(\alpha)$ denote the diameter of $\{f(\beta)\mid \alpha\leq \beta< \aleph_1\}$. Since $\alpha\mapsto \varepsilon(\alpha)$ is a nonincreasing function, if $f$ is not eventually constant, then there is $\delta>0$ such that $\varepsilon(\alpha)\geq \delta$ for all $\alpha$. We can now recursively find two increasing sequences of ordinals $\alpha_n$ and $\beta_n$ such that $\sup_n \alpha_n=\sup_n \beta_n$ and $d(f(\alpha_n),f(\beta_n))>\delta/2$. Thus $f$ is discontinuous at $\sup_n \alpha_n$; contradiction. 

Therefore the diagonal map of $B$ into $C_b(\aleph_1,B)/C_0(\aleph_1, B)$ is surjective. Since it is clearly injective, $B$ is isomorphic to the corona. 

In order to prove that $Y$ can be chosen to be connected, let $Y$ be the so-called long line, defined as follows. To the space $\aleph_1$, between every ordinal $\alpha$ and its successor $\alpha+1$, attach a copy of $[0,1]$ (with the endpoints identified with $\alpha$ and $\alpha+1$). Every $f\colon Y\to B$ is eventually constant, via a proof analogous to the corresponding fact for $f\colon \aleph_1\to B$. As before, for $A=C_0(Y, B)$ this implies that the diagonal map of $B$ into $\cQ(A)$ is an isomorphism. 
\end{proof}

\begin{corollary}\label{Ex.notsigmacompact} There are an infinite compact $K$ and a connected, locally compact, noncompact $Y$ such that the corona algebra of $C_0(K\times Y)$ is countably saturated. 
\end{corollary}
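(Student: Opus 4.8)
The plan is to realise the corona $\cQ(C_0(K\times Y))$ as a \cstar-algebra of the form $C(K)$ for a suitably chosen infinite compact space $K$, and then to arrange that this $C(K)$ is a \cstar-algebra already known to be countably saturated. The engine is Proposition~\ref{P.any}: letting $Y$ denote the connected, locally compact, noncompact space (the long line) produced there, we have $\cQ(C_0(Y,B))\cong B$ for every unital \cstar-algebra $B$. The first step is the standard identification $C_0(K\times Y)\cong C_0(Y,C(K))$, which is valid precisely because $K$ is compact: a function on $K\times Y$ vanishing at infinity is the same datum as a $C(K)$-valued function on $Y$ vanishing at infinity, with continuity in the sup-norm following from compactness of $K$ and with the $C_0$ condition coming from the fact that the projection to $Y$ of a compact subset of $K\times Y$ is compact. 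Taking $B=C(K)$, which is unital exactly because $K$ is compact, Proposition~\ref{P.any} then yields $\cQ(C_0(K\times Y))\cong C(K)$.

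It remains to pick $K$ infinite and compact with $C(K)$ countably saturated. The natural choice is $K=\beta\bbN\setminus\bbN$. Since the functions in $C(\beta\bbN)=\ell_\infty$ that vanish on $\beta\bbN\setminus\bbN$ are exactly those in $c_0$, restriction induces an isomorphism $C(\beta\bbN\setminus\bbN)\cong \ell_\infty/c_0=\cQ(c_0)$. Applying \cite[Theorem~1.5]{FaSh:Rigidity} to the constant sequence $A_n=\bbC$ shows that $\cQ(c_0)$, and hence $C(K)$, is countably saturated.

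Assembling the pieces, $\cQ(C_0(K\times Y))\cong \ell_\infty/c_0$ is countably saturated, with $K=\beta\bbN\setminus\bbN$ infinite and compact and $Y$ the long line, which is connected, locally compact, and noncompact, exactly as required. I expect no serious obstacle here beyond bookkeeping; the only points deserving care are that countable saturation is an isomorphism invariant (so it transfers across the isomorphism supplied by Proposition~\ref{P.any}), that $C(K)$ is genuinely unital (which is why $K$ must be taken compact, matching the hypothesis of Proposition~\ref{P.any}), and the routine verification of the identification $C_0(K\times Y)\cong C_0(Y,C(K))$.
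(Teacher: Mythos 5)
Your proposal is correct and follows essentially the same route as the paper's own proof: take $Y$ to be the long line, use Proposition~\ref{P.any} with $B=C(K)$ via the identification $C_0(K\times Y)\cong C_0(Y,C(K))$, and choose $K=\beta\bbN\setminus\bbN$ so that $C(K)\cong\ell_\infty/c_0$ is countably saturated. Your citation of \cite[Theorem~1.5]{FaSh:Rigidity} with $A_n=\bbC$ is just a harmless variant of the paper's appeal to \cite{eagle2015saturation} or \cite[Theorem~16.5.1]{Fa:STCstar} for the same fact.
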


\begin{proof} Take $Y$ to be the long line. For every unital abelian \cstar-algebra $C(K)$, the corona algebra of $C_0(Y,C(K))\cong C_0(Y\times K)$ is by Proposition~\ref{P.any}, isomorphic to $C(K)$. If $K=\beta\bbN\setminus \bbN$ then $C(K)$ is countably saturated by \cite{eagle2015saturation}, or by the fact that it is the reduced product $\ell_\infty/c_0$ and therefore countably saturated by \cite[Theorem~16.5.1]{Fa:STCstar}. 
\end{proof} 
One can ask for the space $K$ in Corollary~\ref{Ex.notsigmacompact} to be connected. For this, just consider the spectrum of any nontrivial ultrapower of $C([0,1])$, as in this case $C(K)$ is countably saturated.

An open problem due to Sakai related to Proposition~\ref{P.any} is whether every unital (abelian) \cstar-algebra is isomorphic to the corona of a \emph{simple} \cstar-algebra (see \cite[Questions 5--7]{sakai1971derivations}, also \cite[Question~3.21]{farah2022corona}). The simplicity requirement suggests that some form of Proposition~\ref{P.any} was known to Sakai, although it does not appear in \cite{sakai1971derivations} explicitly. In~\cite{sakai1971derivations} Sakai shows that if $L$ is a maximal left ideal in a II$_1$ factor then the corona of $L\cap L^*$ is one-dimensional. An extension of $\calK(\ell_2(2^{\aleph_0}))$ by $\calK(\ell_2)$ whose corona is one-dimensional was constructed in \cite{ghasemi2016extension}.

If a locally compact, noncompact, space $X$ can be written as an increasing union of compact subsets $K_n$ such that $\sup_n|\partial K_n|<\infty$ then (by \cite[Theorem~2.5]{FaSh:Rigidity}) $C_b(X)/C_0(X)$ is countably saturated. There is however an example of a locally compact subspace of $\bbR^3$ without this property that does not satisfy the assumptions of Theorem~\ref{T1} either, communicated to us by Logan Hoehn. 

\section{A homotopy proof}
\label{S.homotopy}
In this section we combine model-theoretic definability (see \cite[\S 9]{BYBHU} and \cite[\S 3]{Muenster}) with some homotopy considerations. This is rather unusual, because in general it is impossible to describe the existence of a continuous path by a first-order formula. For example, the unitary group of $C([0,1])$ is path-connected, while the unitary group of its ultrapower has $2^{\aleph_0}$ connected components. 
The proof of Corollary~\ref{C00} obtained here is a bit longer than the one in \S\ref{S.elementary}, but its ideas are likely to be more relevant to analyzing the saturation of coronas of simple \cstar-algebras. 

On the set of $k$-tuples of a \cstar-algebra $A$ we consider the max norm and write 
\begin{equation}\label{Eq.metric}
\textstyle \|\bar a-\bar b\|=\max_{j<k}\|a_j-b_j\|. 
\end{equation}
A \emph{modulus of uniform continuity} is a nondecreasing $\Delta\colon (0,1]\to (0,1]$ such that $\lim_{t\to 0^+}\Delta(t)=0$. If $A$ is a \cstar-algebra, by $Z^A(\varphi)$ we denote the zero-set of $\varphi$ as computed in $A$. Given a theory $T$, we will say that the zero-set of $\varphi(\bar x)$ is \emph{definable in $T$} if there exist $\delta>0$ and a modulus of uniform continuity $\Delta_\varphi$ such that $\varphi^A(\bar b)<\delta$ implies $\dist(\bar a, Z(\varphi))<\Delta_\varphi(\varphi^A(\bar a)) $ for every model $A$ of $T$ (\cite[\S 9]{BYBHU}). If $\cE$ is the class of all models of $T$, we say that the zero-set of $\varphi(\bar x)$ is \emph{definable relative to $\cE$}. In our applications $\cE$ will be the class of abelian \cstar-algebras. 

Two elements of $Z^A(\varphi)$ are said to be \emph{homotopic} if there is a continuous function $g\colon [0,1]\to Z^A(\varphi)$ such that $g(0)=\bar a$ and $g(1)=\bar b$. The homotopy classes in $Z^A(\varphi)$ are denoted $[\bar a]_h$, and we write 
\[
Z^A(\varphi)/\sim_h
\]
for the quotient space.

\begin{definition}\label{def:Skolem}
Let $\cE$ be an axiomatizable class of \cstar-algebras, and let $\varphi(\bar x)$ be a nonnegative formula in variables $\bar x=(x_0, \dots, x_{k-1})$ whose zero-set is a definable predicate in $\cE$, witnessed by the modulus of uniform continuity~$\Delta_\varphi$. The formula $\varphi$ is said to \emph{admit a continuous Skolem function} in $\cE$ if there exist a term $g_\varphi(\bar x)$ in the language of \cstar-algebras and $\delta>0$ such that, for every $B\in\cE$,
\begin{enumerate}
 \item[]\label{Phi.1} $\varphi^B(\bar b)<\delta$ implies $\varphi^B(g_\varphi(\bar b))=0$ and $\|\bar b-g_\varphi(\bar b)\|<\Delta_\varphi(\varphi^B(\bar b))$. 
\pushcounter
\end{enumerate}
The term $g_\varphi$ is said to be the \emph{Skolem function} for $\varphi$.
\end{definition}

Some formulas that admit continuous Skolem functions are the natural formulas whose zero-sets are the set of projections and the set of unitaries (proofs use the continuous functional calculus, see \cite[\S 3]{Muenster}), as well as the example from Lemma~\ref{L.Sn} below. 

\begin{proposition} \label{P.homotopy} 
Suppose that $\cE$ is an axiomatizable class of \cstar-algebras, and that $\varphi$ admits a continuous Skolem function in $\cE$. Then there exists $\e>0$ such that for every $B\in \cE$ and any two $\bar a$ and $\bar b$ in $Z^B(\varphi)$, $\|\bar a-\bar b\|<\e$ implies that $\bar a$ and $\bar b$ are homotopic. 
\end{proposition}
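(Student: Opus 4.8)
The plan is to realize the homotopy by pushing the straight-line segment between $\bar a$ and $\bar b$ back onto the zero-set using the Skolem function. Concretely, I would let $h\colon [0,1]\to B^k$ be the linear path $h(t)=(1-t)\bar a+t\bar b$ (taken coordinatewise, with respect to the max norm \eqref{Eq.metric}) and then define $G=g_\varphi\circ h$. The goal is to check that $G$ is a continuous map $[0,1]\to Z^B(\varphi)$ with $G(0)=\bar a$ and $G(1)=\bar b$, which is exactly a witness that $\bar a\sim_h\bar b$. The linear path itself need not stay inside $Z^B(\varphi)$; the role of $g_\varphi$ is to project it onto the zero-set without breaking continuity.

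First I would extract the uniform $\e$. Since $\varphi$ is a formula, it is uniformly continuous with a modulus of uniform continuity that does not depend on the structure; choosing $\e>0$ so that $\|\bar u-\bar v\|<\e$ forces $|\varphi^B(\bar u)-\varphi^B(\bar v)|<\delta$ in \emph{every} $\cstar$-algebra (with $\delta$ the constant from the continuous Skolem function) yields a single $\e$ valid for all $B\in\cE$ simultaneously. Assume $\|\bar a-\bar b\|<\e$. The tuples $\bar a,\bar b$ lie in a fixed ball, because the variables $\bar x$ range over a domain of quantification, and that ball is convex, so the whole segment $h(t)$ stays inside it and satisfies $\|h(t)-\bar a\|=t\,\|\bar b-\bar a\|\le\|\bar b-\bar a\|<\e$. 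Using $\varphi^B(\bar a)=0$, uniform continuity then gives $\varphi^B(h(t))<\delta$ for every $t\in[0,1]$.

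Next I would invoke the defining property of $g_\varphi$. From $\varphi^B(h(t))<\delta$ it follows that $\varphi^B(g_\varphi(h(t)))=0$, so $G(t)\in Z^B(\varphi)$ for all $t$; and since the components of $g_\varphi$ are terms in the language of $\cstar$-algebras, $g_\varphi$ is continuous, so $G=g_\varphi\circ h$ is a continuous map into $Z^B(\varphi)$. For the endpoints, $\bar a,\bar b\in Z^B(\varphi)$ means $\varphi^B(\bar a)=\varphi^B(\bar b)=0<\delta$, and the Skolem inequality $\|\bar a-g_\varphi(\bar a)\|<\Delta_\varphi(\varphi^B(\bar a))=\Delta_\varphi(0)=0$ forces $g_\varphi$ to act as the identity on $Z^B(\varphi)$; hence $G(0)=\bar a$ and $G(1)=\bar b$, and $G$ is the desired homotopy.

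I expect the genuinely delicate points to be bookkeeping rather than substance. The first is making the modulus of uniform continuity of $\varphi$, together with the Skolem constant $\delta$, uniform over the entire class $\cE$, which is what allows a single $\e$ to serve every $B\in\cE$ at once. The second is justifying that $g_\varphi$ restricts to the identity on $Z^B(\varphi)$: this amounts to reading the Skolem inequality in the limit $\varphi^B(\bar b)\to 0$, equivalently extending $\Delta_\varphi$ by $\Delta_\varphi(0)=0$, and is precisely what pins the endpoints of $G$ to $\bar a$ and $\bar b$. Everything else reduces to the two elementary observations that a term composed with a continuous path is again continuous, and that convexity of the ambient ball keeps the linear segment in the region where $\varphi^B<\delta$.
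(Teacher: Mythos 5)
Your proposal is correct and is essentially the paper's own argument: both fix $\e$ from the structure-independent modulus of uniform continuity of $\varphi$ so that $\|\bar u-\bar v\|<\e$ forces $|\varphi^B(\bar u)-\varphi^B(\bar v)|<\delta$, push the linear segment between $\bar a$ and $\bar b$ into $Z^B(\varphi)$ by composing with the continuous Skolem term $g_\varphi$, and use that $g_\varphi$ fixes the zero-set to pin the endpoints. Your added care (convexity of the ambient ball, reading the Skolem inequality with $\Delta_\varphi(0)=0$ to get $g_\varphi\rs Z^B(\varphi)=\id$) only makes explicit what the paper states without comment.
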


\begin{proof} 
Let $g_\varphi$ be the Skolem function for $\varphi$ as in Definition~\ref{def:Skolem}. As $\varphi$, being a formula, has its own modulus of uniform continuity (\cite[Theorem~3.5]{BYBHU}), we can fix $\e>0$ small enough so that for every structure $B$ in $\cE$ and all $\bar x$ and $\bar y$ in $B$ of the appropriate sort we have that $\|\bar x-\bar y\|<\e$ implies $|\varphi^B(\bar x)-\varphi^B(\bar y)|<\delta$. 

Fix $B\in \cE$. First, notice that by the properties of $g_\varphi$, $\varphi^B(\bar a)=0$ implies $g_\varphi(\bar a)=\bar a$ for all $\bar a$ of the proper arity. Fix $\bar a$ and $\bar b$ in $Z^B(\varphi)$ such that $\|\bar a-\bar b\|<\e$. For $t\in [0,1]$ define $c_{t,j}:=t a_j +(1-t)b_j$, for $j<k$ . Let $\bar c_t=(c_{t,0},\ldots, c_{t,k-1})$. Then $\|\bar a-\bar c_t\|<\|\bar a-\bar b\|<\e$, and therefore $\varphi^B(\bar c_t)<\delta$ for all $t$. The map $t\mapsto g_\varphi(\bar c_t)$ is a continuous path in $Z^B(\varphi)$ connecting $\bar a$ and $\bar b$. 
\end{proof}

%If $\varphi(\bar x)$ is a nonnegative formula whose zero-set is definable, we let
%\[
%A_\varphi=\cst(\bar x\mid\varphi(\bar x)=0)
%\]
%denote the universal \cstar-algebra defined by the relation $\varphi(\bar x)=0$. In case the \cstar-algebra $A_\varphi$ is abelian and unital, denote by $X_\varphi$ the compact space such that $A_\varphi\cong C(X_\varphi)$. In this case, 

For a $\bar a=(a_0,\dots, a_{k-1})$ in a unital \cstar-algebra $A$, by $\jsp(\bar a)$ we denote its \emph{joint spectrum}. This is the set of tuples $\bar \lambda$ in $\bbC^k$ such that $\bar a-\bar \lambda$ generates a proper ideal in $A^k$. If $\bar a$ is a tuple of commuting normal operators, then $\cst(\bar a, 1)\cong C(\jsp(\bar a))$. 

The following definition generalises that of universal \cstar-algebra given by generators and relations (see \cite[\S II.8.3]{Black:Operator} or \cite[\S 2.3]{Fa:STCstar}), in the abelian setting.

\begin{definition}\label{defin:controlled}
Let $k\in \bbN$ and let $X\subseteq \bbR^k$ be compact. Let $\varphi(\bar x)$ be a formula in $\bar x=(x_0,\dots, x_{k-1})$. We say that $\varphi$ is \emph{controlled by $X$} (or that~$X$ \emph{controls} $\varphi$) if for every \cstar-algebra $B$ and every $k$-tuple $\bar a$ in $B$ we have $\varphi^B(\bar a)=0$ if and only if $\bar a$ is a $k$-tuple of commuting self-adjoint operators such that $\jsp(\bar a)\subseteq X$.
\end{definition}

In the following Lemma and elsewhere, a $^*$-homomorphism $\Phi\colon A\to B$ is canonically extended to all finite powers $A^n$ and if $\bar a=(a_0 \dots, a_{n-1})$ then we write $\Phi(\bar a)$ for $(\Phi(a_0), \dots, \Phi(a_{n-1}))$. 

\begin{lemma} \label{L.Phi.2} 
If $\varphi(\bar x)$ is a formula controlled by a compact $X_\varphi$ and $\Phi\colon A\to B$ is a $^*$-homomorphism between \cstar-algebras, then $\Phi[Z^A(\varphi)]\subseteq Z^B(\varphi)$.
\end{lemma}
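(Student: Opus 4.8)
The plan is to unwind Definition~\ref{defin:controlled} on both sides and check that each of the three defining properties---self-adjointness, commutativity, and containment of the joint spectrum in $X_\varphi$---survives an application of $\Phi$. Fix $\bar a\in Z^A(\varphi)$. By the control hypothesis in $A$, the entries of $\bar a$ are commuting self-adjoint elements with $\jsp(\bar a)\subseteq X_\varphi$; by the control hypothesis in $B$, it therefore suffices to show that $\Phi(\bar a)$ is a commuting tuple of self-adjoint elements with $\jsp(\Phi(\bar a))\subseteq X_\varphi$.

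The first two properties are immediate from $\Phi(a_j)^*=\Phi(a_j^*)=\Phi(a_j)$ and $\Phi(a_i)\Phi(a_j)=\Phi(a_ia_j)=\Phi(a_ja_i)=\Phi(a_j)\Phi(a_i)$, so the whole content is the spectral inclusion $\jsp(\Phi(\bar a))\subseteq X_\varphi$, namely that a $^*$-homomorphism does not enlarge the joint spectrum of a commuting self-adjoint tuple. I would establish this via continuous functional calculus. For every continuous $f\colon\bbR^k\to\bbC$ vanishing on $X_\varphi\cup\{0\}$, the element $f(\bar a)$ is defined in $\cst(\bar a)$, and by the spectral mapping theorem it has spectrum $f(\jsp(\bar a))\subseteq f(X_\varphi)=\{0\}$, so $f(\bar a)=0$. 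Since $\Phi$ intertwines the continuous functional calculus of commuting self-adjoint tuples for functions vanishing at the origin, we obtain $f(\Phi(\bar a))=\Phi(f(\bar a))=0$. Letting $f$ range over all such functions forces $\jsp(\Phi(\bar a))\subseteq X_\varphi\cup\{0\}$: were there a point $\bar\mu\in\jsp(\Phi(\bar a))\setminus(X_\varphi\cup\{0\})$, Urysohn's lemma would produce an $f$ vanishing on $X_\varphi\cup\{0\}$ with $f(\bar\mu)\neq0$, and then the spectral mapping theorem would give $f(\bar\mu)\in\Sp(f(\Phi(\bar a)))$, contradicting $f(\Phi(\bar a))=0$. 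With $0\in X_\varphi$ this yields $\jsp(\Phi(\bar a))\subseteq X_\varphi$, and the control hypothesis in $B$ then gives $\Phi(\bar a)\in Z^B(\varphi)$, as desired.

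The one place demanding care---and the likeliest obstacle---is the behaviour at the origin in the non-unital case: a $^*$-homomorphism can shrink the joint spectrum but may adjoin the point $0$ (already $\Sp(\Phi(a))\subseteq\Sp(a)\cup\{0\}$ for a single element). The argument above controls this by using functions that also vanish at $0$ and by invoking $0\in X_\varphi$; this last fact holds for all the formulas we use, since the zero tuple satisfies their defining relations and hence lies in $Z^B(\varphi)$, forcing $\jsp(\bar 0)=\{0\}\subseteq X_\varphi$. A cleaner route that avoids the spectral bookkeeping altogether is to record that ``$\varphi$ is controlled by $X_\varphi$'' amounts to a universal property---$\bar a\in Z^B(\varphi)$ if and only if there is a $^*$-homomorphism $\rho\colon C_0(X_\varphi\setminus\{0\})\to B$ carrying the coordinate functions to $\bar a$---after which the lemma follows instantly by composing $\rho$ with $\Phi$.
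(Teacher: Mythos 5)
Your overall mechanism---reduce to the spectral inclusion $\jsp(\Phi(\bar a))\subseteq X_\varphi$ and prove it via functional calculus and the spectral mapping theorem---is essentially the paper's (the paper quotes $\Phi(f(\bar a))=f(\Phi(\bar a))$ and the characterization that $\bar r\in\jsp(\bar a)$ iff $\sum_j(a_j-r_j)^2$ is not invertible, from which $\jsp(\Phi(\bar a))\subseteq\jsp(\bar a)$ follows at once). However, your handling of the point $0$ contains a genuine error. Your argument only yields $\jsp(\Phi(\bar a))\subseteq X_\varphi\cup\{0\}$, and you discharge the extra point by asserting that $0\in X_\varphi$ ``since the zero tuple satisfies the defining relations.'' This is false for precisely the formulas the lemma is applied to: $\varphi_n$ of Lemma~\ref{L.Sn} contains the condition $\bigl\|\sum_{j<n}x_j^2-1\bigr\|=0$, so the zero tuple gives $\varphi_n(\bar 0)=1$, and indeed $X_{\varphi_n}=S^{n-1}$ does not contain the origin. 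Worse, in the non-unital generality you insisted on, the statement itself fails, so no spectral bookkeeping at the origin can save it: take $\varphi_1$ (controlled by $S^0=\{-1,1\}$), $A=\bbC$, $B=M_2(\bbC)$, and $\Phi(\lambda)=\mathrm{diag}(\lambda,0)$; then $1\in Z^A(\varphi_1)$ but $\varphi_1^B(\Phi(1))=\|\Phi(1)^2-1\|=1$, so $\Phi(1)\notin Z^B(\varphi_1)$.

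The correct repair, implicit in the paper, is that $\varphi$ is a formula in the language of \emph{unital} \cstar-algebras (it mentions the constant $1$), so the relevant $^*$-homomorphisms are unital---as are all the maps to which the lemma is actually applied ($\End^1(A_\varphi)$ and the morphisms $\Psi_{\bar b}$ into $\cst(\bar b,1)$). For unital $\Phi$ one has $\Phi\bigl(\sum_j(a_j-r_j)^2\bigr)=\sum_j(\Phi(a_j)-r_j)^2$ and invertibility is preserved, giving $\jsp(\Phi(\bar a))\subseteq\jsp(\bar a)\subseteq X_\varphi$ directly, with no need for functions vanishing at $0$. Your closing ``cleaner route'' inherits the same flaw: the universal property of $C_0(X_\varphi\setminus\{0\})$ characterizes commuting self-adjoint tuples with $\jsp(\bar a)\subseteq X_\varphi\cup\{0\}$, not $\jsp(\bar a)\subseteq X_\varphi$. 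In the example above, the non-unital $^*$-homomorphism $\rho\colon C(\{-1,1\})\to M_2(\bbC)$ sending the coordinate function to $\mathrm{diag}(1,0)$ exists, yet $\mathrm{diag}(1,0)\notin Z^{M_2(\bbC)}(\varphi_1)$, so composing $\rho$ with $\Phi$ does not witness membership in the zero-set.
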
 

\begin{proof} Fix $\bar a\in Z^A(\varphi)$. Since $\bar a$ is a tuple of commuting self-adjoints, so is $\Phi(\bar a)$. By the Spectral Mapping Theorem, for every $^*$-homomorphism $\Phi$ and every $f\in C(X_\varphi)$ we have $\Phi(f(\bar a))=f(\Phi(\bar a))$ (see e.g. \cite[II.1.5.2]{Black:Operator}). %(to see this, note that this is clear in the case when $f$ is a $^*$-polynomial and apply the Stone--Weierstrass Theorem).
In particular, since $\bar r\in \jsp(\bar a)$ if and only if $\sum_j (a_j-r_j)^2$ is not invertible, we have that $\jsp(\Phi(\bar a))\subseteq\jsp(\bar a)$, which gives the thesis.
\end{proof} 

Before we continue the general analysis, an example could be helpful. 

\begin{lemma} \label{L.Sn}
For $n\geq 1$ consider the formula 
\[
\varphi_n=\textstyle\max\left(\max_{j<n} \|x_j-x_j^*\|,%\max_{i,j<n}\norm{[x_i,x_j]},
\left\|\sum_{j<n} x_j^2-1\right\|\right).
\] 
Then the following holds. 
\begin{enumerate}
%\item \label{1.L.Sn} The zero-set of $\varphi_n$ is definable in the class of all abelian \cstar-algebras. 
\item \label{2.L.Sn} $\varphi_n$ admits a continuous Skolem function in the class of abelian \cstar-algebras and
\item\label{3.L.Sn} $\varphi_n$ is controlled by $S^{n-1}$, the $n-1$-dimensional unit sphere. 
\end{enumerate}
\end{lemma}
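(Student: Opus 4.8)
The plan is to reduce both parts to the concrete pointwise picture available in an abelian \cstar-algebra via Gelfand duality, where commutativity is automatic, and then to argue entirely by continuous functional calculus. Throughout, since the ambient class $\cE$ consists of abelian \cstar-algebras, any tuple $\bar a$ with $\varphi_n^B(\bar a)=0$ automatically consists of commuting self-adjoints, so the only real content of the control statement is the spectral condition.

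For \eqref{3.L.Sn}, I would identify a unital abelian $B$ with $C(K)$. An element $a_j$ is self-adjoint iff it is real-valued, so $\varphi_n^B(\bar a)=0$ iff each $a_j$ is real-valued and $\sum_{j<n}a_j^2=1$ pointwise, i.e.\ iff the continuous map $\alpha\colon K\to\bbR^n$, $\alpha(x)=(a_0(x),\dots,a_{n-1}(x))$, takes values in $S^{n-1}$. Since the $a_j$ commute and are self-adjoint, $\jsp(\bar a)$ is exactly the closure of $\alpha[K]$, and as $S^{n-1}$ is closed this lies in $S^{n-1}$ iff $\alpha[K]\subseteq S^{n-1}$. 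Conversely, if $\bar a$ is a tuple of commuting self-adjoints with $\jsp(\bar a)\subseteq S^{n-1}$, then under $\cst(\bar a,1)\cong C(\jsp(\bar a))$ the coordinate functions satisfy $\sum_j t_j^2=1$ on $\jsp(\bar a)$, whence $\sum_j a_j^2=1$ and $\varphi_n^B(\bar a)=0$. This is precisely the equivalence defining ``controlled by $S^{n-1}$''.

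For \eqref{2.L.Sn} the Skolem term first self-adjointizes and then renormalizes. Assume the variables range over a fixed ball of radius $R$ (as is standard for the sorts of the \cstar-language), and suppose $\varphi_n^B(\bar b)<\delta$. Put $c_j:=(b_j+b_j^*)/2$; this is a genuine $^*$-polynomial term, $c_j$ is self-adjoint, and $\|c_j-b_j\|=\tfrac12\|b_j-b_j^*\|\le\tfrac12\varphi_n^B(\bar b)$. Writing $h:=\sum_{j<n}c_j^2$, a direct estimate gives $\|h-\sum_j b_j^2\|=O(R\,\varphi_n^B(\bar b))$, so $\|h-1\|=O(\varphi_n^B(\bar b))$; hence for $\delta$ small (depending only on $n$ and $R$) the spectrum of $h$ lies in $[\tfrac12,\tfrac32]$. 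Fixing a continuous $f\colon[0,\infty)\to[0,\infty)$ with $f(t)=t^{-1/2}$ on $[\tfrac12,\tfrac32]$, set $g_j:=c_jf(h)$ and $g_{\varphi_n}(\bar x):=(g_0,\dots,g_{n-1})$. In the abelian algebra $c_j$ and $f(h)$ are commuting self-adjoints, so each $g_j$ is self-adjoint and $\sum_j g_j^2=f(h)^2h=h^{-1}h=1$ exactly; thus $\varphi_n^B(\bar g)=0$. This is the continuous functional calculus Skolem function, exactly in the spirit of the projection and unitary examples.

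It then remains to check the norm bound with a uniform modulus: I would estimate $\|g_j-b_j\|\le\|c_j(f(h)-1)\|+\|c_j-b_j\|$, where $\|c_j\|\le\|h\|^{1/2}\le 2$ by positivity of the commuting squares and $\|f(h)-1\|\le\sup_{|t-1|\le\|h-1\|}|t^{-1/2}-1|=O(\|h-1\|)=O(\varphi_n^B(\bar b))$, so that $\|\bar g-\bar b\|=O(\varphi_n^B(\bar b))$. Taking $\Delta_{\varphi_n}(t):=\min(Ct,1)$ gives the required modulus of uniform continuity. The step I expect to be the main obstacle is pinning down the \emph{uniformity} of these estimates, namely that $\delta$ and $\Delta_{\varphi_n}$ depend only on $n$ (and the fixed ball of the sort) and not on $B$; this is exactly where the boundedness of the variables and the explicit Lipschitz control of $t\mapsto t^{-1/2}$ on $[\tfrac12,\tfrac32]$ are essential, after which the argument is a routine functional-calculus computation.
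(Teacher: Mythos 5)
Your proof is correct and follows essentially the same route as the paper's: for \eqref{2.L.Sn} the paper also symmetrizes via $b_j=(a_j+a_j^*)/2$ and renormalizes by the inverse square root of $b=\sum_j b_j^2$ (invertible since $\|1-b\|<1$), and for \eqref{3.L.Sn} it reads the condition $\varphi_n^B(\bar a)=0$ spectrally, exactly as you do via the Gelfand transform. Two small differences are worth recording, both in your favor.

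First, your truncated function $f$ with $f(t)=t^{-1/2}$ on $[\tfrac12,\tfrac32]$ makes the Skolem term everywhere defined, whereas the paper's term $\tilde f_{n,j}(\bar x)=f_j(\bar x)\bigl(\sum_j f_j(\bar x)^2\bigr)^{-1}$ (in which the exponent should evidently be $-1/2$, matching the computation with $b^{-1/2}$ in the body of the proof) only makes sense where the sum of squares is invertible; since Definition~\ref{def:Skolem} only imposes conditions when $\varphi^B(\bar b)<\delta$ this is a cosmetic repair, but it is the cleaner formulation, and your explicit Lipschitz bookkeeping supplies the uniform modulus $\Delta_{\varphi_n}$ that the paper leaves implicit. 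Second, and more substantively, you verify controlledness only for abelian $B$, observing that commutativity is automatic there, whereas Definition~\ref{defin:controlled} (and the paper's one-line proof of \eqref{3.L.Sn}) quantifies over \emph{all} \cstar-algebras $B$. Your restriction is in fact forced: in $M_2(\bbC)$ the pair $\bigl(\tfrac{1}{\sqrt2}\sigma_x,\tfrac{1}{\sqrt2}\sigma_z\bigr)$ of (suitably scaled) Pauli matrices consists of self-adjoints with $a_0^2+a_1^2=1$, so $\varphi_2^{M_2(\bbC)}(\bar a)=0$, yet $a_0$ and $a_1$ do not commute, so the unrestricted biconditional of Definition~\ref{defin:controlled} fails for $\varphi_n$. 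Since every downstream use of controlledness in \S\ref{S.homotopy} (Lemma~\ref{L.Phi.2} as invoked in Propositions~\ref{P.action} and~\ref{P.main}) takes place inside the axiomatizable class of abelian \cstar-algebras, your abelian-only version is exactly what the argument needs, and it is the statement that is actually true.
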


\begin{proof}
%As \eqref{2.L.Sn} implies \eqref{1.L.Sn}, it suffices to prove \eqref{2.L.Sn} and \eqref{3.L.Sn}.
Fix $n\geq 1$. We first prove \eqref{2.L.Sn}. Let $\e>0$ be small enough $(\e<1/(5n)$ suffices) and suppose that in some abelian \cstar-algebra $B$ we have a tuple $\bar a$ such that $\varphi_n^B(\bar a)<\e$. Set $b_j=(a_j+a_j^*)/2$ and $b=\sum b_j^2$. Since $\norm{\sum a_j}\leq 1+\e$, we have $\norm{b}\leq 1+\e$. Furthermore, since $\norm{1-b}<1$, $b$ is invertible. With $c_i=b_ib^{-1/2}$ we have $\varphi_n^B(\bar c)=0$. Also, $\|\bar a-\bar c\|\leq \|\bar a-\bar b\|+\|b\|\|1-b^{-1}\|$ can be made arbitrarily small by choosing a small $\e>0$. 

Let
$f_{j}(\bar x):=\frac 12 (x_j+x_j)^*$, and let
\[
\textstyle \tilde f_{n,j}(\bar x):=f_j(\bar x)\left(\sum_j f_j(\bar x)^2\right)^{-1}
\]
The argument above shows that the function $g_n$ given by 
\[
g_n(\bar x):= (\tilde f_{n,0}(\bar x),\ldots,\tilde f_{n,n-1}(\bar x))
\]
is a Skolem function for $\varphi_n$.

\eqref{3.L.Sn} Let $B$ be a \cstar-algebra, and let $\bar a$ be a tuple in $B$ (of the appropriate length). Then $\varphi_n^B(\bar a)=0$ asserts exactly that $\bar a$ is a tuple of commuting self-adjoints and that the joint spectrum of $\bar a$ is contained in the unit sphere~$S^{n-1}$. This is precisely stating that $\varphi_n$ is controlled by $S^{n-1}$.
\end{proof} 

The reason for restricting our attention to abelian \cstar-algebras is that, rather inconveniently for our purposes, the zero-set of $\varphi_n$ is not definable in the class of all \cstar-algebras. This is because the algebra $C(S^{n-1})$, for $n\geq 3$,  is not weakly semiprojective (see \cite{sorensen2012characterization} and the introduction to \cite{enders2019almost}). This means that there is an ultraproduct of \cstar-algebras $\prod_{\cU} A_j$ and an embedding of $C(S^{n-1})$ into it that cannot be lifted to an embedding of $C(S^{n-1})$ into any of the $A_j$.  However, the machinery of this section may be applicable to other weakly stable formulas (see~\cite{Lo:Lifting}). 

We are ready to introduce the main technical tool of this section. Let $\End^1(A)$ denote the semigroup of unital endomorphisms of a unital \cstar-algebra $A$, taken with respect to composition. Suppose that $A$ is finitely generated and fix a tuple of generators $\bar a$. Thus $\alpha\in \End^1(A)$ is uniquely determined by $\alpha(\bar a)$. Equip $\End^1(A)$ with the metric defined by (using \eqref{Eq.metric}) 
\[
d(\alpha, \beta)=\|\alpha(\bar a)-\beta(\bar a)\|
\]
This metric endows $\End^1(A)$ with a natural homotopy relation $\sim_h$. 

Suppose that a formula $\varphi$ is controlled by a compact $X_\varphi\subseteq \bbR^k$ for some~$k\geq 1$. Write, for brevity, 
\[
A_\varphi:=C(X_\varphi).
\]
Let $\bar a$ denote the canonical generators of the universal \cstar-algebra $A_\varphi$, corresponding to the free variables of $\varphi$. Suppose that $B$ is a unital \cstar-algebra such that $Z^B(\varphi)$ is nonempty. Fix $\bar b\in Z^B(\varphi)$. Since $X_\varphi$ controls $\varphi$ we have $\jsp(\bar b)\subseteq X_\varphi$. This inclusion induces a $^*$-homomorphism $\Psi_{\bar b}\colon A_\varphi\to \cst(\bar b)$ such that $\Psi_{\bar b}(\bar a)=\bar b$. For $\alpha\in \End^1(A_\varphi)$ let 
\begin{equation}\label{Eq.natural}
\alpha.\bar b=\Psi_{\bar b}(\alpha(\bar a)). 
\end{equation}
Applying Lemma~\ref{L.Phi.2} twice, we have $\varphi(\alpha.\bar b)=\varphi(\alpha(\bar a))=0$. Thus \eqref{Eq.natural} defines a continuous action of $\End^1(A_\varphi)$ on $Z^B(\varphi)$. We call this action \emph{natural}. 

An action of $\End^1(A_\varphi)$ on $Z^B(\varphi)$ is said to be \emph{continuously implemented} if there is a continuous map
\begin{equation}\label{Eq.ctns}
\Phi\colon \End^1(A_\varphi)\times Z^B(\varphi)\to Z^{B}(\varphi)
\end{equation}
such that $\alpha.\bar b=\Phi(\alpha, \bar b)$. Our interest in continuously implemented actions stems from the fact that they preserve homotopy in $Z^B(\varphi)$. 

The natural action of $\End^1(A_\varphi)$ on some $Z^B(\varphi)$ is \emph{standard} if it is continuously implemented and 
there is a surjection 
\begin{equation}\label{Eq.standard}
\Psi\colon Z^B(\varphi)/\sim_h \to Z^{A_\varphi}(\varphi)/\sim_h 
\end{equation}
which is \emph{equivariant}, i.e., $\alpha.\Psi(\bar b)=\Psi(\alpha. \bar b)$. 

If $\End^1(A)$ is infinite, then its natural action on the zero-set of $\varphi$ in the ultrapower of $A$ is typically nonstandard, by countable saturation. The contrapositive of this fact serves as the main idea of the proofs in this section. 

\begin{definition} \label{Def.varphi} Suppose that $\cE$ is an axiomatizable class of \cstar-algebras, that $\varphi(\bar x)$ is a nonnegative formula in variables $\bar x=(x_0, \dots, x_{k-1})$. 
 We say that $\varphi$ \emph{admits definable homotopy} in $\cE$ if 
\begin{itemize}
\item $\varphi$ is controlled by some compact $X_\varphi\subseteq\bbR^k$, 
\item $A_\varphi=C(X_\varphi)$ is in $\cE$, and
\item $\varphi$ admits a continuous Skolem function in $\cE$. 
%\item $\varphi$ is controlled by some compact $X_\varphi\subseteq\mathbb R^k$.
\end{itemize} 
\end{definition}

%We say that a formula $\varphi$ which is controlled by some compact $X_\varphi\subseteq\bbR^k$, and admits a continuous Skolem function for the class of abelian \cstar-algebras \emph{has zero-set admitting definable homotopy}. The thesis of Proposition~\ref{P.action} justifies this terminology. 

\begin{proposition}\label{P.action} 
Suppose that a formula $\varphi$ admits definable homotopy in an axiomatizable class of \cstar-algebras $\cE$. If $B$ is a \cstar-algebra in $\cE$, then the natural action of $\End^1(A_\varphi)$ on $Z^B(\varphi)$ is continuously implemented. 
%Then for every unital abelian \cstar-algebra $B$ there is an action of $\End^1(C(X_\varphi))$ on $Z^B(\varphi)$ which is a congruence with respect to homotopy. Therefore we have an action of $\End^1(C(X_\varphi))/\sim_h$ on $\{[\bar a]_h\mid \bar a\in Z^B(\varphi)\}$. 
\end{proposition}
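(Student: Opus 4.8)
The plan is to identify $\End^1(A_\varphi)$ with $Z^{A_\varphi}(\varphi)$, recognise the natural action as the joint continuous functional calculus, and then prove that this map is jointly continuous. First I would observe that, since $A_\varphi = C(X_\varphi)$ is generated by $\bar a$, the assignment $\alpha \mapsto \alpha(\bar a)$ maps $\End^1(A_\varphi)$ injectively into $Z^{A_\varphi}(\varphi)$ --- into the zero-set by Lemma~\ref{L.Phi.2}, injectively because $\bar a$ generates --- and it is isometric by the very definition of the metric $d$ on $\End^1(A_\varphi)$. Writing $\bar c = \alpha(\bar a)$, the homomorphism $\Psi_{\bar b}$ is the restriction map $C(X_\varphi) \to C(\jsp(\bar b)) \cong \cst(\bar b, 1)$, so that $\alpha.\bar b = \Psi_{\bar b}(\bar c) = (c_0(\bar b), \dots, c_{k-1}(\bar b))$, where each $c_j \in C(X_\varphi)$ is applied to the commuting self-adjoint tuple $\bar b$ by functional calculus (legitimate since $\jsp(\bar b) \subseteq X_\varphi$, as $\varphi$ is controlled by $X_\varphi$). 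The proposition thereby reduces to joint continuity of $(\bar c, \bar b) \mapsto \bar c(\bar b)$; I note in passing that only the control clause of definable homotopy is used here, the Skolem function being irrelevant to this statement.

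For the joint continuity I would invoke the standard principle that a two-variable map is jointly continuous as soon as it is continuous in the first variable uniformly in the second, and continuous in the second variable for each fixed value of the first. The first clause is immediate from contractivity of $^*$-homomorphisms: writing also $\bar c' = \beta(\bar a)$, one has $\norm{\alpha.\bar b - \beta.\bar b} = \max_{j<k}\norm{\Psi_{\bar b}(c_j - c_j')} \le \max_{j<k}\norm{c_j - c_j'}_\infty = d(\alpha, \beta)$, a bound that is $1$-Lipschitz and completely uniform in $\bar b$.

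The second clause, continuity of $\bar b \mapsto \bar c(\bar b)$ for fixed $\bar c$, is the only substantial point, and it is precisely continuity of the joint functional calculus in its tuple argument; this is where the real work lies. Given $\eta > 0$, I would apply the Stone--Weierstrass theorem on the compact set $X_\varphi \subseteq \bbR^k$ (whose real coordinate functions separate points) to approximate each $c_j$ within $\eta$ by a real polynomial $p_j$ in the coordinates. Since $\bar b \mapsto p_j(\bar b)$ is then a genuine polynomial expression in the commuting, uniformly bounded entries $b_0, \dots, b_{k-1}$ (all admissible $\bar b$ satisfy $\norm{b_i} \le \max_{x\in X_\varphi}|x_i|$), it is norm-continuous; combining this with the contractive estimate $\norm{(c_j - p_j)(\bar b)} \le \norm{c_j - p_j}_\infty < \eta$ in a three-$\eta$ argument gives $\limsup_{\bar b' \to \bar b}\norm{c_j(\bar b) - c_j(\bar b')} \le 2\eta$, and letting $\eta \to 0$ yields continuity. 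The standard principle then assembles the two clauses into a jointly continuous map $\Phi(\alpha, \bar b) = \alpha.\bar b$, which takes values in $Z^B(\varphi)$ by Lemma~\ref{L.Phi.2}, establishing that the natural action is continuously implemented. The main obstacle is thus isolated entirely in the continuity of functional calculus, dispatched by polynomial density.
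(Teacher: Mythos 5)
Your proof is correct and takes essentially the same approach as the paper: both identify the natural action with the joint continuous functional calculus, $\alpha.\bar b=\bar f_\alpha(\bar b)$ where $\bar f_\alpha=\alpha(\bar a)$, using only the control of $\varphi$ by $X_\varphi$ (the paper likewise never invokes the Skolem function here). The sole difference is that the paper asserts the joint continuity of $(\alpha,\bar b)\mapsto \bar f_\alpha(\bar b)$ without proof (``clearly continuous''), whereas you supply the verification---the $1$-Lipschitz estimate in $\alpha$ via contractivity of $\Psi_{\bar b}$, and continuity in $\bar b$ via Stone--Weierstrass polynomial approximation on $X_\varphi$---which is a sound filling-in of that step rather than a different argument.
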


\begin{proof} 
Let $\bar a=(a_0,\dots, a_{k-1})$ be the generators of $A_\varphi$, so that $X_\varphi$ is homeomorphic to $\jsp(\bar a)$ and $A_\varphi\cong C(\jsp(\bar a))$. Fix for a moment $\alpha\in \End^1(A_\varphi)$. Then $\alpha$ induces a continuous map $\bar f_\alpha\colon X_\varphi\to X_\varphi$ naturally identified with a $k$-tuple $\bar f_\alpha=(f_{\alpha, j}: j<k)$ in $C(X_\varphi)$ such that $\alpha(a_j)=f_{\alpha,j}$ for $j<k$. We will summarize this situation by 
\[
\alpha(\bar a)=\bar f_\alpha.
\]
The correspondence $\alpha\mapsto \bar f_\alpha$ is a homeomorphism between $\End^1(A_\varphi)$ and the space of continuous functions from $X_\varphi$ into $X_\varphi$. 
%We make a few observations. 
%Also note that $\alpha$ is a surjective if and only if $\cst(\bar f_\alpha)=\cst(\bar a)$.\marginpar{IF: Before we had `automorphism' which is probably false; I fixed the statement, but we don't seem to need this.} 
Lemma~\ref{L.Phi.2} implies that if $\beta\in \End^1(A_\varphi)$ then $\beta(\bar f_\alpha)$ is well-defined. Clearly $\beta(\bar f_\alpha)=\bar f_{\beta\circ \alpha}$. Therefore the natural action of $\End^1(A_\varphi)$ on $Z^B(\varphi)$ (see~\eqref{Eq.natural}) satisfies 
\[
\alpha.\bar b=\bar f_\alpha(\bar b). 
\] 
This action is clearly continuous, and it therefore preserves homotopy classes. Thus $\Phi(\alpha, \bar b)=\bar f_\alpha(\bar b)$ is the required continuous functions. 
%Now fix a homotopy between $\alpha$ and $\beta$ in $\End^1(A_\varphi)$, that is, a continuous path $(\alpha_t)_{t\in [0,1]}\subseteq\End^1(A_\varphi)$ such that $\alpha=0$ and $\alpha_1=\beta$. Then $\alpha_t(\bar a)$ is a continuous path between $\alpha(\bar a)$ and $\beta(\bar a)$ for any $\bar a\in Z^B(\varphi)$. Hence if $\alpha\sim_h\beta$ then $[\alpha.\bar a]_h=[\beta.\bar a]_h$ for all $\bar a\in Z^B(\varphi)$; this is the thesis. 
\end{proof}

For a unital \cstar-algebra $A$ we write 
\begin{equation}
\label{Eq.Ainfty}
A_\infty:=C_b([0,\infty),A)/C_0([0,\infty),A). 
\end{equation}

\begin{lemma} \label{L.standard} Suppose that a formula $\varphi$ admits definable homotopy in an axiomatizable class of \cstar-algebras $\cE$. Then \begin{enumerate}
\item \label{1.L.standard} the natural action of $\End^1(A_\varphi)$ on $Z^{A_\varphi}(\varphi)$ is standard;
\item \label{2.L.standard} if $B$ is a unital \cstar-algebra in $\cE$ and the action of $\End^1(A_\varphi)$ on~$Z^B(\varphi)$ is standard, then the natural action of $\End^1(A_\varphi)$ on $Z^{B_\infty}(\varphi)$ is standard; and 
\item \label{3.L.standard} the action of $\End^1(A_\varphi)$ on $Z^{(A_\varphi)_\infty}(\varphi)$ is standard. 
\end{enumerate}
\end{lemma}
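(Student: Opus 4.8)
The plan is to dispatch \eqref{1.L.standard} essentially for free, to prove \eqref{2.L.standard} by a lifting argument, and then to obtain \eqref{3.L.standard} as the instance $B=A_\varphi$ of \eqref{2.L.standard} fed by \eqref{1.L.standard}. For \eqref{1.L.standard} I take $B=A_\varphi$. Since $A_\varphi\in\cE$ by the definition of admitting definable homotopy, Proposition~\ref{P.action} already gives that the natural action on $Z^{A_\varphi}(\varphi)$ is continuously implemented, and (again by that proposition) $\alpha$ acts by the continuous map $\bar f_\alpha$, so the action descends to $Z^{A_\varphi}(\varphi)/\sim_h$. For the equivariant surjection \eqref{Eq.standard} I would simply take the identity map of $Z^{A_\varphi}(\varphi)/\sim_h$: it is trivially onto, and equivariance $\alpha.\Psi([\bar b]_h)=\Psi(\alpha.[\bar b]_h)$ is immediate. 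Hence the action is standard.

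The heart of the matter is \eqref{2.L.standard}. Assume $B\in\cE$ is unital and the action on $Z^B(\varphi)$ is standard, witnessed by an equivariant surjection $\Psi_B\colon Z^B(\varphi)/\sim_h\to Z^{A_\varphi}(\varphi)/\sim_h$. First I note $B_\infty\in\cE$ (clear for the class of abelian \cstar-algebras, which is our case of interest, since $C_b([0,\infty),B)$ and its quotients are abelian when $B$ is), so Proposition~\ref{P.action} yields that the natural action on $Z^{B_\infty}(\varphi)$ is continuously implemented; it remains to build the surjection \eqref{Eq.standard}. Fix $\bar c\in Z^{B_\infty}(\varphi)$ and a bounded continuous lift $t\mapsto\bar c(t)$. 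Because $\varphi$ is controlled by $X_\varphi$ and $B_\infty=C_b([0,\infty),B)/C_0([0,\infty),B)$, membership $\bar c\in Z^{B_\infty}(\varphi)$ forces $\varphi^B(\bar c(t))\to 0$. Applying the continuous Skolem function $g_\varphi$ of Definition~\ref{def:Skolem} pointwise for large $t$ replaces $\bar c(t)$ by $g_\varphi(\bar c(t))\in Z^B(\varphi)$ at distance $<\Delta_\varphi(\varphi^B(\bar c(t)))\to 0$, so I may assume $\bar c(t)\in Z^B(\varphi)$ for all large $t$. As a continuous path into $Z^B(\varphi)$, this tail lies in a single homotopy class; I call it $H(\bar c)\in Z^B(\varphi)/\sim_h$ and set $\Psi_\infty([\bar c]_h):=\Psi_B(H(\bar c))$.

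To see $\Psi_\infty$ is well defined I would invoke Proposition~\ref{P.homotopy} and fix the $\e>0$ it provides. Two lifts of the same $\bar c$ differ by a $C_0$-function, hence are within $\e$ and inside $Z^B(\varphi)$ for large $t$, so Proposition~\ref{P.homotopy} makes them homotopic and $H(\bar c)$ is independent of the lift. For homotopy invariance let $\sigma\colon[0,1]\to Z^{B_\infty}(\varphi)$ be a path from $\bar c$ to $\bar c'$; uniform continuity of $\sigma$ lets me pick $0=s_0<\dots<s_N=1$ with $\|\sigma(s_{i+1})-\sigma(s_i)\|<\e$, and then for large $t$ the (tail) lifts of $\sigma(s_i)$ and $\sigma(s_{i+1})$ are within $\e$ in $Z^B(\varphi)$, hence homotopic by Proposition~\ref{P.homotopy}, giving $H(\sigma(s_i))=H(\sigma(s_{i+1}))$ and so $H(\bar c)=H(\bar c')$. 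Surjectivity follows from that of $\Psi_B$ by lifting a class of $Z^B(\varphi)/\sim_h$ to a constant function in $B_\infty$, which is a zero of $\varphi$ by Lemma~\ref{L.Phi.2}. Finally, for equivariance a tail-in-$Z^B(\varphi)$ representative is moved by $\alpha$ to $t\mapsto\bar f_\alpha(\bar c(t))$, whose class is $\alpha.H(\bar c)$, so $\Psi_\infty(\alpha.[\bar c]_h)=\Psi_B(\alpha.H(\bar c))=\alpha.\Psi_B(H(\bar c))=\alpha.\Psi_\infty([\bar c]_h)$ using equivariance of $\Psi_B$.

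Part \eqref{3.L.standard} is then immediate: by \eqref{1.L.standard} the action on $Z^{A_\varphi}(\varphi)$ is standard, and applying \eqref{2.L.standard} with $B=A_\varphi\in\cE$ shows the action on $Z^{(A_\varphi)_\infty}(\varphi)$ is standard. The step I expect to be the main obstacle is the well-definedness of $H$ in \eqref{2.L.standard}, i.e.\ controlling homotopy classes under passage to the quotient $B_\infty$; this is precisely where the uniform homotopy radius $\e$ of Proposition~\ref{P.homotopy} together with a uniform-continuity partition of the homotopy is essential. A secondary technical point, which I would treat using that $\varphi$ is controlled by a compact set and that $B_\infty$ is the quotient $C_b([0,\infty),B)/C_0([0,\infty),B)$, is the justification that a lift of a zero of $\varphi$ in $B_\infty$ satisfies $\varphi^B(\bar c(t))\to 0$, so that the Skolem function applies along the tail.
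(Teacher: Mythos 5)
Your proposal is correct and follows essentially the same route as the paper's proof: the identity map for \eqref{1.L.standard}, and for \eqref{2.L.standard} a lift-and-Skolemize construction in which your tail class $H(\bar c)$ is exactly the paper's $\Theta_0$ (the paper composes $\Psi\circ\Theta$ just as you form $\Psi_B\circ H$), with surjectivity via constant paths and \eqref{3.L.standard} obtained by composing \eqref{1.L.standard} and \eqref{2.L.standard}. If anything, your writeup is slightly more explicit than the paper's on two points it leaves implicit: the subdivision of a homotopy in $Z^{B_\infty}(\varphi)$ into steps of size $<\e$ (the paper's Claim only treats pointwise-close lifts), and the verification that $H$ (resp.\ $\Theta$) is equivariant via the action of $\bar f_\alpha$.
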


\begin{proof} By Proposition~\ref{P.action}, each of the actions in \eqref{1.L.standard}--\eqref{3.L.standard} is continuously implemented. We fix $\Phi$ as in \eqref{Eq.ctns} for each one of them and prove that $\Psi$ as in \eqref{Eq.standard} exists.

For \eqref{1.L.standard}, take $\Psi=\id_{Z^{A_\varphi}(\varphi)/\sim_h}$. 

\eqref{2.L.standard} Fix $\Phi\colon \End^1(A_\varphi)\times Z^B(\varphi)\to Z^B(\varphi)$ and $\Psi\colon Z^B(\varphi)/\sim_h \to Z^{A_\varphi}(\varphi)/\sim_h$ as in \eqref{Eq.ctns} and \eqref{Eq.standard}. Let $\delta>0$ and a continuous Skolem function $g_\varphi$ for $\varphi$ be as in Definition~\ref{def:Skolem}. By Proposition~\ref{P.homotopy} there is $\e>0$ such that for every unital abelian \cstar-algebra $D$ and any two $\bar a$ and $\bar b$ in $Z^B(\varphi)$, $\|\bar a-\bar b\|<\e$ implies that $\bar a$ and $\bar b$ are homotopic. Fix $0<\delta_0<\delta$ such that $\varphi(\bar a)<\delta_0$ implies $\|g_\varphi(\bar a)-\bar a\|<\varepsilon/3$. 

Let $C:=B_\infty$, and denote by $\pi$ the canonical quotient map $\pi\colon C_b([0,\infty),B)\to C$. Fix $\bar b\in C_b([0,\infty),B)$ such that $\pi(\bar b)\in Z^{C}(\varphi)$. Hence for some $t_0$ and all $t\geq t_0$ we have $\varphi(\bar b(t))<\delta_0$. Let $\bar b'(t)=g_\varphi(\bar b(\min(t,t_0)))$. Then $\bar b'\in C_b([0,\infty),B)$ and $\pi(\bar b')\in Z^C(\varphi)$. Also, $\pi(\bar b')=\pi(\bar b)$, and for all $t$, $\bar b'(t)$ belongs to the homotopy class of $\bar b'(0)$; for a moment let $\Theta_0(\bar b)=\bar b'(0)$. 

\begin{claim} If $\pi(\bar b)\sim_h \pi(\bar c)$, then $\Theta_0(\bar b)\sim_h \Theta_0(\bar c)$.
\end{claim}

\begin{proof} If $t$ is large enough to have $\max(\varphi(b(t)), \varphi(c(t))<\delta_0$ and $|b(t)-c(t)|<\varepsilon/3$, then (by the choice of $\delta_0$) we have $\|b'(t)-c'(t)\|<\varepsilon$ and therefore $b'(t)\sim_h c'(t)$. By the continuity of $b'$ and $c'$, $b'(s)\sim_h c'(s')$ for all $s$ and $s'$, and the conclusion follows. 
\end{proof} 

By the Claim, we can define $\Theta\colon Z^C(\varphi)\to Z^B(\varphi)$ by 
\[
\Theta([\bar b]_h)=\Theta_0(\bar b).
\]
By considering constant paths in $B_\infty$, one sees that~$\Theta$ is surjective. 

Therefore, $\Psi\circ\Theta$ is the required equivariant surjection of $Z^C(\varphi)/\sim_h$ onto $Z^{A_\varphi}(\varphi)/\sim_h$. 

\eqref{3.L.standard} is an immediate consequence of \eqref{1.L.standard} and \eqref{2.L.standard}.
\end{proof}

\begin{proposition} \label{P.main} Suppose that a formula $\varphi$ admits definable homotopy in an axiomatizable class of \cstar-algebras $\cE$. Moreover assume that there is $\alpha\in \End^1(A_\varphi)$ such that for every $\bar b\in Z^{A_\varphi}(\varphi)$ the set 
\[
I_\alpha(\bar b):=\{n\in \bbN\mid \bar b\sim_h \alpha^n(\bar a)\text{ for some }\bar a\in Z^{A_\varphi}(\varphi)\}
\]
is finite. 
If $D$ is a unital abelian \cstar-algebra such that the action of $\End^1(A_\varphi)$ on $D$ is standard, then $D$ is not countably saturated. %If $\varphi$ is quantifier-free, then the type witnessing this can be chosen to be existential. \marginpar{IF: In order to prove that the type is existential we would need to work out the complexity of the `new quantifiers' (this was regrettably not done in \cite{Muenster}). Do we want to go there?}
\end{proposition}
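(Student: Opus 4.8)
The plan is to argue by contradiction: assume that $D$ is countably saturated and use standardness of the action, together with the finiteness of $I_\alpha$, to produce a countable type over $D$ that is finitely satisfiable but cannot be realized. First I would unpack the hypotheses. Since $\varphi$ admits definable homotopy, Proposition~\ref{P.homotopy} supplies an $\e>0$ such that, in any algebra of $\cE$, two elements of the zero-set at distance $<\e$ are homotopic. Since the action on $Z^D(\varphi)$ is standard, fix the continuous implementation $\Phi$ of \eqref{Eq.ctns} and the equivariant surjection $\Psi\colon Z^D(\varphi)/\sim_h\to Z^{A_\varphi}(\varphi)/\sim_h$ of \eqref{Eq.standard}. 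Using surjectivity of $\Psi$, choose $\bar a_D\in Z^D(\varphi)$ with $\Psi([\bar a_D]_h)=[\bar a]_h$, where $\bar a$ is the canonical generating tuple of $A_\varphi$. Set $\bar a_m:=\alpha^m.\bar a_D$; these lie in $Z^D(\varphi)$ because the action preserves the zero-set (Lemma~\ref{L.Phi.2}), and since the action descends to homotopy classes, equivariance gives $\Psi([\bar a_m]_h)=\alpha^m.[\bar a]_h=[\alpha^m.\bar a]_h$ for every $m$.

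Next I would introduce the type. For $N\in\bbN$ let $P_N:=\{\bar a_m\mid m\ge N\}$ and consider
\[
\mathbf t(x):=\{\varphi(x)=0\}\cup\{\dist(x,P_N)=0\mid N\in\bbN\}.
\]
This is a countable type over the countable parameter set $\{\bar a_m\mid m\in\bbN\}$ (the conditions $\dist(x,P_N)=0$, i.e.\ $\inf_{m\ge N}\|x-\bar a_m\|=0$, are of the form handled by countable saturation). It is finitely satisfiable: a finite subfamily mentions only $P_{N_0},\dots,P_{N_r}$, and taking $M=\max_i N_i$ the element $\bar a_M$ satisfies $\varphi(\bar a_M)=0$ and lies in each $P_{N_i}$, so $\dist(\bar a_M,P_{N_i})=0$; thus $\bar a_M$ realizes the subfamily.

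Finally, under the assumption that $D$ is countably saturated, $\mathbf t$ is realized by some $\bar b\in Z^D(\varphi)$. For each $N$, the condition $\dist(\bar b,P_N)=0$ produces $m_N\ge N$ with $\|\bar b-\bar a_{m_N}\|<\e$, whence $\bar b\sim_h\bar a_{m_N}$ by Proposition~\ref{P.homotopy}, and therefore $\Psi([\bar b]_h)=[\bar a_{m_N}]_h=[\alpha^{m_N}.\bar a]_h$. Consequently $\Psi([\bar b]_h)$ is homotopic to $\alpha^{m_N}(\bar a)$ for every $N$, so $\{m_N\mid N\in\bbN\}\subseteq I_\alpha\bigl(\Psi([\bar b]_h)\bigr)$; as $m_N\ge N$ this set is infinite, contradicting the hypothesis that $I_\alpha$ is finite on every element of $Z^{A_\varphi}(\varphi)$. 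Hence $D$ is not countably saturated.

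The step I expect to be the crux is the passage from the metric condition $\dist(x,P_N)=0$ to the genuinely homotopy-theoretic conclusion $\bar b\sim_h\bar a_{m_N}$: this is precisely what Proposition~\ref{P.homotopy} (hence the existence of a continuous Skolem function, not merely definability of the zero-set) delivers, and it is what couples the first-order/saturation input to the homotopy invariant measured by $\Psi$. A secondary point to verify is bookkeeping around $I_\alpha$: one must note that the witness $\bar a$ in the definition of $I_\alpha$ may be taken to be the fixed generating tuple, so that $[\alpha^{m_N}.\bar a]_h=[\alpha^{m_N}(\bar a)]_h$ genuinely certifies $m_N\in I_\alpha(\Psi([\bar b]_h))$; here it is used that for the generators the natural action and the endomorphism $\alpha^{m_N}$ agree.
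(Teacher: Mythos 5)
There is a genuine gap, and it sits precisely at the step you wave through as routine: the conditions $\dist(x,P_N)=0$, i.e.\ $\inf_{m\ge N}\|x-\bar a_m\|=0$, are \emph{not} conditions in the sense of continuous logic. Each is an infimum of countably many formulas over a countable parameter set, and such an infimum is neither a formula nor, in general, a definable predicate: the truncations $\min_{N\le m\le K}\|x-\bar a_m\|$ converge to it only pointwise, and uniformly only when $\{\bar a_m\mid m\ge N\}$ is totally bounded. Countable saturation guarantees realization only for countable sets of conditions built from formulas (equivalently, definable predicates) with parameters, so it simply does not apply to your $\mathbf t$. This is not a formal quibble. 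In $\ell_\infty/c_0$, which is countably saturated, pick elements $a_m$ pairwise at distance $1$ (classes of characteristic functions of pairwise almost disjoint infinite sets); the scheme $\{\dist(x,\{a_m\mid m\ge N\})=0\mid N\in\bbN\}$ is finitely satisfiable by tail elements, exactly as in your argument, yet unrealized, since a realization would lie within $\frac12$ of two distinct $a_m$'s, contradicting the triangle inequality. So a ``type'' of your shape can fail in a countably saturated algebra, and your intended contradiction never gets off the ground.

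The paper's proof is designed around exactly this obstruction: it never quantifies over a countable orbit of parameters. Because the zero-set of $\varphi$ is definable (this is where the continuous Skolem function hypothesis in Definition~\ref{def:Skolem} earns its keep), quantification over it yields definable predicates, and the type used consists of $\varphi(\bar x)=0$ together with the \emph{approximate divisibility} conditions $\inf_{\bar y,\,\varphi(\bar y)=0}\|\bar f_m(\bar y)-\bar x\|=0$ for all $m\ge 1$, where $\bar f_m$ implements $\alpha^m$; these are legitimate conditions. Finite satisfiability is then the factorial trick: $\bar b=\bar f_{n!}(\bar a)$ for any $\bar a\in Z^D(\varphi)$ satisfies the $m$-th condition for all $m\le n$, since $\bar f_m(\bar f_{n!/m}(\bar a))=\bar f_{n!}(\bar a)$ --- note this works directly inside $D$, with no need to choose a $\Psi$-preimage $\bar a_D$ of the generators. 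Your homotopy endgame, on the other hand, is essentially the paper's and would survive the repair verbatim: given a realization $\bar b$, put $\Psi([\bar b]_h)=[\bar c]_h$, pick $m\notin I_\alpha(\bar c)$, extract $\bar d\in Z^D(\varphi)$ with $\|\bar f_m(\bar d)-\bar b\|<\e$, conclude $\bar f_m(\bar d)\sim_h\bar b$ by Proposition~\ref{P.homotopy}, hence $\alpha^m.\Psi([\bar d]_h)=[\bar c]_h$, contradicting $m\notin I_\alpha(\bar c)$. So the missing idea is not the coupling of saturation to the homotopy invariant, which you have, but the formulation of the type itself: one must express ``$\bar x$ is approximately in the range of $\alpha^m$ on the definable set $Z(\varphi)$'' rather than ``$\bar x$ clusters at the tail of the orbit sequence,'' since only the former is first-order over $D$.
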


\begin{proof} As before, let $X_\varphi$ denote the spectrum of $A_\varphi$, so that Lemma~\ref{L.Phi.2} implies that for an abelian \cstar-algebra $B$ we have $X_\varphi\supseteq \jsp(\bar a)$ if and only if $\bar a\in Z^B(\varphi)$. As in the proof of Proposition~\ref{P.action}, for every $\alpha\in \End^1(A_\varphi)$ there exists a function $\bar f_\alpha$ such that for all $\bar a$ in $A_\varphi$ we have 
\[
\alpha(\bar a)=\bar f_\alpha(\bar a). 
\]
Fix $\alpha\in \End^1(A)$ such that $I_\alpha(\bar b)$ is finite for all $\bar b$ in $A_\varphi$. For $m\geq 1$ write $\bar f_m:=\bar f_{\alpha^m}$, hence $f_{m,j}=f_{\alpha^m,j}$ for $j<k$. 

Since the zero-set of $\varphi$ is definable, applying quantification over it to definable predicates results in definable predicates (see \cite[Definition~3.2.3]{Muenster}). We will write $\inf_{\bar x, \varphi(\bar x)=0}$ and $\sup_{\bar x, \varphi(\bar x)=0}$ for the corresponding quantifiers. 

Consider the $k$-type $\bt(\bar x)$ whose conditions are $\varphi(\bar x)=0$, together with 
\begin{equation}\label{Eq.m}
\inf_{\bar y, \varphi(\bar y)=0} \|\bar f_{m}(\bar y) -\bar x\|=0
\end{equation}
for $m\geq 1$. 
We claim that every finite subset of $\bt$ is satisfied. Fix $\bar a\in Z^{D}(\varphi)$. For $n\geq 1$ let $\bar b$ in $D$ be defined by 
\[
\bar b:=\bar f_{n!}(\bar a)
\]
Note that $\varphi^B(\bar b)=0$. For $1\leq m \leq n$, with $\bar c:=\bar f_{n!/m}(\bar a)$, we have $\bar f_{m}(\bar c)=\bar f_{n!}(\bar a)=\bar b$, thus $\bar b$ satisfies all conditions as in \eqref{Eq.m} for $m\leq n$; since $n\geq 1$ was arbitrary, the type $\bt$ is finitely satisfiable. 

Assume, towards obtaining a contradiction, that $\bt$ is satisfied by some $\bar b$ in $D$. Then $\bar b\in Z^D(\varphi)$. Since the action is standard, there is a surjective equivariant $\Psi\colon Z^D(\varphi)/\sim_h\to Z^{A_\varphi}(\varphi)/\sim_h$. Fix $\bar c\in Z^{A_\varphi}(\varphi)$ such that $\Psi([\bar b]_h)=[\bar c]_h$. By the assumption, the set $I_\alpha(\bar c)$ is finite. Fix $m$ that does not belong to this set. 

Let $\e>0$ be as in the conclusion of Proposition~\ref{P.homotopy}. Since $\bar b$ satisfies $\bt$, there exists $\bar d\in Z^D(\varphi)$ such that $\|\bar f_m(\bar d)-\bar b\|<\e$. Therefore $\bar f_m(\bar d)\sim_h \bar b$, and $\alpha^m.\Psi([\bar d]_h)=\Psi([\bar b]_h)=[\bar c]_h$. This contradicts the choice of $m$, and completes the proof. 
\end{proof}

\begin{corollary} For $n\geq 2$, the corona algebra of $C_0(\bbR^n)$ is not countably saturated. 
\end{corollary}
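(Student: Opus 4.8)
The plan is to realise $\cQ(C_0(\bbR^n))$ as an algebra of the form $A_\infty$, so that the machinery culminating in Proposition~\ref{P.main} applies, and then to feed in the formula $\varphi_n$ from Lemma~\ref{L.Sn}. First I would observe that the corona of $C_0(X)$ sees only the behaviour of $X$ at infinity: if $C\subseteq X$ is closed with relatively compact complement, then restriction $C_b(X)\to C_b(C)$ descends to an isomorphism $\cQ(C_0(X))\cong\cQ(C_0(C))$, because a function vanishing at infinity of $X$ restricts to one vanishing at infinity of $C$, while every bounded continuous function on $C$ extends to $X$ by the Tietze theorem. Taking $C=\{x\in\bbR^n:\|x\|\ge 1\}\cong S^{n-1}\times[0,\infty)$ yields
\[
\cQ(C_0(\bbR^n))\cong\cQ\bigl(C_0(S^{n-1}\times[0,\infty))\bigr)=C_b(S^{n-1}\times[0,\infty))/C_0(S^{n-1}\times[0,\infty)),
\]
and by \eqref{Eq.Ainfty} the right-hand side is precisely $(C(S^{n-1}))_\infty$ with $A=C(S^{n-1})$.

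By Lemma~\ref{L.Sn} the formula $\varphi_n$ is controlled by $S^{n-1}$ and admits a continuous Skolem function in the class $\cE$ of abelian \cstar-algebras; since $A_{\varphi_n}=C(S^{n-1})$ lies in $\cE$, the formula $\varphi_n$ admits definable homotopy in $\cE$ in the sense of Definition~\ref{Def.varphi}. Hence Lemma~\ref{L.standard}\eqref{3.L.standard} shows that the natural action of $\End^1(C(S^{n-1}))$ on $Z^{(C(S^{n-1}))_\infty}(\varphi_n)$ is standard. Given the identification of the previous paragraph, Proposition~\ref{P.main} will produce the desired conclusion as soon as one exhibits a single $\alpha\in\End^1(C(S^{n-1}))$ for which $I_\alpha(\bar b)$ is finite for every $\bar b\in Z^{C(S^{n-1})}(\varphi_n)$.

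The verification of this last point is the homotopy-theoretic core of the argument. The zero-set $Z^{C(S^{n-1})}(\varphi_n)$ is exactly the set of continuous self-maps of $S^{n-1}$, whose homotopy classes form $[S^{n-1},S^{n-1}]\cong\Z$ via the Brouwer degree (using $n\ge 2$, so $n-1\ge 1$). An endomorphism $\alpha$ induced by a self-map of degree $d$ acts on homotopy classes by multiplication by $d$, so the image of $\alpha^m$ is $d^m\Z$ and
\[
I_\alpha(\bar b)=\{m\in\bbN: d^m\mid \deg(\bar b)\}.
\]
Choosing $|d|\ge 2$ — for instance the $d$-fold power map on $S^1$ together with its iterated suspensions — makes $I_\alpha(\bar b)$ finite for every $\bar b$ of nonzero degree.

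The main obstacle is the single null-homotopic class: when $\deg(\bar b)=0$ the displayed set is all of $\bbN$, since every constant map lies in the image of each $\alpha^m$; equivalently $\bigcap_m d^m\Z=\{0\}\ne\varnothing$. I therefore expect the delicate part of the proof to be the removal of this degree-zero class, so that the hypothesis of Proposition~\ref{P.main} is genuinely met. The natural route is to pass from $\varphi_n$ to a refinement whose zero-set consists only of essential (nonzero-degree) configurations — for example a based version in which the homotopy monoid acting is multiplication on $\Z\setminus\{0\}$, where the images of the $\alpha^m$ have empty intersection — while retaining both the continuous Skolem function and the standardness of the induced action on the corona. Once the trivial class has been quarantined in this way, the degree computation above gives the finiteness of all the $I_\alpha(\bar b)$, and Proposition~\ref{P.main} yields that $(C(S^{n-1}))_\infty\cong\cQ(C_0(\bbR^n))$ fails to be countably saturated for every $n\ge 2$.
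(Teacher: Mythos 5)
You have reconstructed the paper's own route step for step: the identification $\cQ(C_0(\bbR^n))\cong C(S^{n-1})_\infty$ (the paper deletes the open unit ball and uses $\bbR^n\setminus\bbD^n\cong S^{n-1}\times[0,\infty)$, which is your restriction argument), then $\varphi_n$ from Lemma~\ref{L.Sn}, definable homotopy as in Definition~\ref{Def.varphi}, standardness of the action on $Z^{C(S^{n-1})_\infty}(\varphi_n)$ via Lemma~\ref{L.standard}\eqref{3.L.standard}, and finally Proposition~\ref{P.main} fed by the degree computation on $[S^{n-1},S^{n-1}]\cong\Z$. Your bookkeeping there is correct, and the obstruction you flag is real: for every choice of $\alpha$, the null-homotopic class gives $I_\alpha(\bar b)=\{m\mid d^m\text{ divides }0\}=\bbN$, so the hypothesis of Proposition~\ref{P.main} is literally never satisfied for $A_{\varphi_n}=C(S^{n-1})$. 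Moreover the failure is not cosmetic: the type $\bt$ constructed in the proof of Proposition~\ref{P.main} is actually \emph{realized} by any constant tuple $\bar x=p\in S^{n-1}$, because $\bar f_m$ has degree $d^m\neq 0$, hence is surjective, so a constant tuple $\bar y=q$ with $\bar f_m(q)=p$ witnesses $\inf_{\bar y,\varphi(\bar y)=0}\|\bar f_m(\bar y)-\bar x\|=0$ exactly. You should know that the paper's own write-up of this corollary glosses over precisely this point: it asserts that $I(\bar b)$ ``consists of its divisors'' and ``is therefore finite'', which fails for degree zero (every integer divides $0$); it even takes $\alpha$ corresponding to the class of $1\in\Z$, for which $\alpha^m\sim_h\id$ and $I_\alpha(\bar b)=\bbN$ for \emph{every} $\bar b$ — your choice $|d|\geq 2$ is the only sensible reading. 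So your proposal matches the paper's argument and, to your credit, detects a genuine gap in it that the paper does not acknowledge.

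Where your proposal itself falls short is the promised repair, which is sketched but cannot work as stated within the paper's framework. By Definition~\ref{defin:controlled}, the zero-set of a controlled formula is determined by the joint spectrum alone, and a degree-zero surjection of $S^{n-1}$ has $\jsp=S^{n-1}$, the same as the identity; hence no formula controlled by a compact set can excise the inessential configurations, and a ``based'' version presupposes a distinguished character of $C(S^{n-1})$ that the language of \cstar-algebras does not provide. Quarantining the trivial class therefore needs a genuinely new idea — a modified type (perhaaps with parameters tying the realizing element to a fixed nonzero-degree class), not a modified controlled formula — and your proposal does not supply it, so as a proof it is incomplete, just as the paper's \S\ref{S.homotopy} argument is. The corollary itself is nevertheless safe: since $\bbN\times\bbR$ embeds as a closed subspace of $\bbR^n$ for $n\geq 2$, Theorem~\ref{T0} (via the elementary Theorem~\ref{T1}) yields the stronger failure of countable quantifier-free saturation, which is exactly how Corollary~\ref{C00} is established in \S\ref{S.limiting}; citing that would have closed your argument unconditionally.
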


\begin{proof} We claim that the corona algebra of $C_0(\bbR^n)$ is isomorphic to $C(S^{n-1})_\infty$ (see~\S\ref{Eq.Ainfty} for the definition of $A_\infty$). To see this, let $\bbD^n$ denote the $n$-dimensional open unit ball. Since the closure of $\bbD^n$ inside $\bbR^n$ is compact, we have 
\[
\cQ(C_0(\bbR^n))\cong \cQ(C_0(\bbR^n\setminus \bbD^n)).
\]
 Since every compact Hausdroff space $K$ satisfies 
\[
C(K)_\infty\cong \cQ(C_0(K\times [0,\infty))), 
\]
 it remains to see that $\bbR^n\setminus \bbD^n$ is homeomorphic to $S^{n-1}\times [0,\infty)$. The map $S^{n-1}\times [0,\infty)\mapsto \bbR^n\setminus \bbD^n$ that sends $(x_0,\dots, x_n,t)$ to $(tx_0, tx_1,\dots tx_n)$ is clearly a homeomorphism. 

To conclude the proof, we just have to compute the homotopy classes of maps $S^n\to S^n$. This is the $n$-th homotopy group of the sphere $S^n$, $\pi_n(S^n)$. It is well known (see e.g., \cite[Corollary 4.25]{Hatcher}) that $\pi_n(S^n)\cong\mathbb Z$. With such association, fix an endomorphism $\alpha$ of $C(S^n)$ that corresponds to the class of $1$. Then $\alpha$ has an infinite order. Recall that $\varphi_n$ was defined in Lemma~\ref{L.Sn}, and that it is controlled by $S^{n-1}$.  Every $\bar b\in Z^{C(S^n)}(\varphi_n)$ belongs to some homotopy class, and the set $I(\bar b)$ as in Proposition~\ref{P.main} consists of its divisors. It is therefore finite, and by Proposition~\ref{P.main} the conclusion follows. 
\end{proof}

\section{Reduced products}\label{S.redprod}

By \cite[Theorem~1.5]{FaSh:Rigidity} (also \cite[Theorem~16.5.1]{Fa:STCstar}), the reduced product $\prod A_n/\bigoplus A_n$ of \cstar-algebras (or other metric structures of the same language) is countably saturated. If all $A_n$ are unital, then this reduced product is isomorphic to the corona of $\bigoplus_n A_n$. 
Even if the $A_n$s are not unital, the quotient $\cM(\bigoplus A_n)/\bigoplus_n \cM(A_n)$ is also countably saturated. This is because we have $\mathcal M(\bigoplus A_n)\cong\prod\mathcal M(A_n)$ (see \cite[II.8.1.3]{Black:Operator})
and therefore $\cM(\bigoplus A_n)/\bigoplus_n \cM(A_n)\cong\prod\cM(A_n)/\bigoplus\cM(A_n)$, and its saturation again follows from \cite[Theorem~1.5]{FaSh:Rigidity}. 

\begin{corollary}
 There is a sequence of separable abelian \cstar-algebras $A_n$ such that the corona of $\bigoplus_n A_n$ is not countably saturated. 
\end{corollary}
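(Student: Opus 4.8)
The plan is to exhibit an explicit such sequence and reduce directly to Theorem~\ref{T0}. Take $A_n=C_0(\bbR)$ for every $n$. Each $A_n$ is separable and abelian, and --- strikingly for the contrast we want --- the corona of each \emph{individual} $A_n$ is countably saturated by \cite[Theorem~2.5]{FaSh:Rigidity}. The point will therefore be that passing to the direct sum destroys saturation, even though every summand has a saturated corona.

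First I would identify the direct sum topologically. Writing $\bbN$ with the discrete topology, the topological disjoint union $\bigsqcup_n \bbR$ is the locally compact space $\bbN\times\bbR$, and I claim $\bigoplus_n C_0(\bbR)\cong C_0(\bbN\times\bbR)$. The content of this identification is that a continuous function $f$ on $\bbN\times\bbR$ vanishes at infinity exactly when each restriction $f\rs(\{n\}\times\bbR)$ lies in $C_0(\bbR)$ and $\|f\rs(\{n\}\times\bbR)\|_\infty\to 0$; this is precisely membership in the $c_0$-direct sum. This is also consistent with the multiplier computation of this section: $\cM(\bigoplus_n C_0(\bbR))\cong\prod_n C_b(\bbR)=C_b(\bbN\times\bbR)=\cM(C_0(\bbN\times\bbR))$, and consequently the two coronas coincide.

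With this identification in hand, the conclusion is immediate from Theorem~\ref{T0}, applied with the infinite discrete space $D=\bbN$ and the space $Y=\bbR$, which is locally compact, $\sigma$-compact, noncompact, and connected. Indeed $D\times Y=\bbN\times\bbR$ embeds as a closed subspace of $X=\bbN\times\bbR$ (namely as itself), so Theorem~\ref{T0} yields that the corona of $C_0(\bbN\times\bbR)\cong\bigoplus_n C_0(\bbR)$ is not countably quantifier-free saturated, and hence not countably saturated. (Equivalently, one could invoke Theorem~\ref{T1} with the closed, connected, noncompact, pairwise disjoint subspaces $Y_n=\{n\}\times\bbR$, each compact set meeting only finitely many of them.)

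There is no genuinely hard step here: the argument is an assembly of the topological identification with the already-established Theorem~\ref{T0}. The only point requiring care is the verification that the $c_0$-direct sum of the $C_0(\bbR)$ corresponds \emph{exactly} to $C_0$ of the disjoint union $\bbN\times\bbR$, so that both the multiplier and the corona computations transfer correctly; the hypotheses of Theorem~\ref{T0} are then checked by inspection.
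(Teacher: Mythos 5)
Your proposal is correct and takes essentially the same route as the paper: the paper's proof also realizes $\bigoplus_n A_n$ as $C_0$ of a disjoint union of connected, noncompact, locally compact spaces (it allows arbitrary such $X_n$ where you specialize to $X_n=\bbR$) and then applies Theorem~\ref{T1} exactly as in your parenthetical remark, with your detour through Theorem~\ref{T0} being only a cosmetic repackaging of that same argument. Your verification that $\bigoplus_n C_0(\bbR)\cong C_0(\bbN\times\bbR)$, including the multiplier identification $\cM(\bigoplus_n C_0(\bbR))\cong\prod_n C_b(\bbR)$, matches the paper's use of \cite[II.8.1.3]{Black:Operator}.
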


\begin{proof} Suppose that $X_n$ is a locally compact connected noncompact space, and let $X=\bigsqcup X_n$ (the disjoint union of the spaces $X_n$). This space satisfies the hypotheses of Theorem~\ref{T1}, therefore the corona algebra of $C_0(X)$ is not quantifier-free saturated. Since $C_0(X)\cong\bigoplus C_0(X_n)$, by \cite[II.8.1.3]{Black:Operator}, $\mathcal M(C_0(X))$ is isomorphic to $\prod C_b(X_n)$, and therefore by Theorem~\ref{T1}, the algebra $\prod C_b(X_n)/\bigoplus C_0(X_n)$ is not quantifier-free countably saturated. 
\end{proof}

A \cstar-algebra is called \emph{projectionless} if it has no projections other than $0$ and (possibly) $1$.

\begin{theorem}\label{T.DirectSums} Let $A_n$ be a sequence of nonunital \cstar-algebras infinitely many of which are projectionless. Then the corona of $\bigoplus A_n$ is not quantifier-free saturated.

In particular, there is a sequence of simple separable \cstar-algebras $A_n$ such that the 
corona of $\bigoplus A_n$ is not quantifier-free saturated.
\end{theorem}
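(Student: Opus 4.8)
The plan is to exhibit a single quantifier-free $1$-type over the corona that is consistent but omitted, in a way that parallels the local-pieces-escaping-to-infinity mechanism of Theorem~\ref{T1}. First I would record the structural description. Writing $A=\bigoplus_n A_n$, the isomorphism $\cM(A)\cong\prod_n\cM(A_n)$ (see \cite[II.8.1.3]{Black:Operator}) identifies the corona with $\prod_n\cM(A_n)/\bigoplus_n A_n$, and the key point is that the ideal we quotient by is $\bigoplus_n A_n$ and \emph{not} $\bigoplus_n\cM(A_n)$. Consequently, if $1_n$ denotes the unit of $\cM(A_n)$ and $\delta_n\in\prod_m\cM(A_m)$ is the tuple with $1_n$ in coordinate $n$ and $0$ elsewhere, then $\dot 1_n:=\pi(\delta_n)$ is a \emph{nonzero} projection of the corona precisely because $1_n\notin A_n$; indeed $\|\dot 1_n\|=\dist(1_n,A_n)=1$ since $A_n$ is nonunital. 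Let $S=\{n : A_n\text{ is projectionless}\}$, infinite by hypothesis, let $P=\pi((1_n)_n)$ be the unit of the corona, and let $P_S=\pi((1_n[n\in S])_n)$. The $\dot 1_n$ play the role of the local pieces, $S$ supplies the escape to infinity, and connectedness of spectra will replace connectedness of the $Y_n$.

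Next I would write down the type. Let $\bt(x)$ consist of $x=x^*$, $0\le x\le 1$, $\|x^2-x\|=0$, $\|xP_S\|=1$, together with $\|x\dot 1_n\|=0$ for every $n\in S$. All of these are quantifier-free, the last family using the parameters $\dot 1_n$ and the inequalities using the unit $P$. For consistency, given a finite subset mentioning $\dot 1_n$ only for $n$ in a finite $F\subseteq S$, I would pick $m\in S\setminus F$ and verify that $\dot 1_m$ realizes it exactly: it is a projection, so $0\le\dot 1_m\le P$ and $\|\dot 1_m^2-\dot 1_m\|=0$; it satisfies $\dot 1_m\le P_S$ and $\|\dot 1_m\|=1$, so $\|\dot 1_m P_S\|=1$; and $\dot 1_m\dot 1_n=0$ for $n\neq m$ by orthogonality of distinct coordinates.

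The heart of the argument is that $\bt$ is omitted, and this is where projectionlessness enters. Suppose $x=\pi((x_n)_n)$ realized $\bt$. The conditions $\|x\dot 1_n\|=0$ for $n\in S$ say exactly that $x_n\in A_n$ for each $n\in S$, since $x\dot 1_n$ is supported on the single coordinate $n$ with entry $x_n$, and such an element is $0$ in the corona iff its entry lies in $A_n$. After a routine functional-calculus adjustment of the lift modulo the ideal, I may assume each such $x_n$ is a positive contraction in $A_n$ with $\|x_n^2-x_n\|\to 0$. Because the entries of $xP_S$ lie in the ideal along $S$, its corona norm is computed as a limsup of the actual norms, so $\|xP_S\|=\limsup_{n\in S}\|x_n\|=1$; fix $n_j\in S$ with $\|x_{n_j}\|\to 1$. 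Here I invoke the decisive consequence of projectionlessness: a positive element of a nonunital projectionless \cstar-algebra has connected spectrum $\sigma(x_{n_j})=[0,\|x_{n_j}\|]$, because any gap would yield, via functional calculus, a nontrivial projection (note $0\in\sigma(x_{n_j})$ by nonunitality). Since $\|x_{n_j}\|\to 1>\tfrac12$, eventually $\tfrac12\in\sigma(x_{n_j})$, whence $\|x_{n_j}^2-x_{n_j}\|\ge\tfrac14$, contradicting $\|x_n^2-x_n\|\to 0$. I expect the only subtle points to be precisely this localization, namely guaranteeing that the coordinates witnessing $\|xP_S\|=1$ lie in $S$ (which the parameter $P_S$ forces) and that the lift may be taken positive inside $A_n$.

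Finally, for the last clause I would take all $A_n$ equal to a fixed simple, separable, nonunital, stably projectionless \cstar-algebra, for instance the Razak--Jacelon algebra $\mathcal W$; then every index is projectionless and the first part of the theorem applies verbatim.
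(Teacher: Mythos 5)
Your proposal is correct and takes essentially the same route as the paper's proof: identify $\cM(\bigoplus A_n)$ with $\prod\cM(A_n)$, use the coordinate units $\dot 1_n$ both as consistency witnesses and to force the coordinates of a lift of a realizing element into the projectionless summands, and then contradict projectionlessness by functional calculus on an almost-projection of norm close to $1$. Your two cosmetic variations are fine: the condition $\|xP_S\|=1$ merges the paper's two cases (the paper instead adds orthogonality to $1-\dot p_X$ when not all $A_n$ are projectionless), and your connected-spectrum formulation ($\tfrac12\in\sigma(x_{n_j})$ forces $\|x_{n_j}^2-x_{n_j}\|\geq\tfrac14$) is the same contradiction the paper reaches by extracting a nontrivial projection $d_n$ with $\|c_n-d_n\|<\tfrac14$, stated slightly more carefully since the norms $\|x_n\|$ only tend to $1$ along a subsequence of $S$.
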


\begin{proof}
Let $A=\bigoplus A_n$. By \cite[II.8.1.3]{Black:Operator}, we can identify $\mathcal M(A)$ with $\prod \cM(A_n)$. Again, if $a\in\mathcal M(A)$ we denote by $\dot a$ its image in $\mathcal Q(A)$. 

We first prove the theorem in case all $A_n$ are projectionless. Let $a_n\in\mathcal M(A)$ be the sequence which is the identity of $\mathcal M(A_n)$ in the $n$-th entry and $0$ otherwise. Consider the type $\bt(x)$ given by $\dot x\geq 0$, $\norm{\dot x}=1$, $\dot x^2-\dot x=0$ and $\dot x\dot a_n=0$.

If $\bt_0\subseteq\bt$ is finite, fix $n$ such that $x\dot a_n=0$ does not appear in $\bt_0$. Then $\dot a_n$ satisfies $\bt_0$, so $\bt_0$ is consistent. 

We now show that $\bt$ is not realized. Suppose that $c=(c_n)\in\prod M(A_n)$ is such that $\dot c$ realizes $\bt$. Since $\dot c\dot a_n=0$, we have $c_n\in A_n$ for all $n$. In addition $\dot c^2-\dot c=0$, hence $c^2-c\in\bigoplus A_n$, and in particular $\norm{c_n^2-c_n}\to 0$ as $n\to\infty$. Furthermore, since $\norm{\dot c}=1$, we have that $c\notin\bigoplus A_n$, and in particular $\norm{c_n}\to 1$, so $\lim_n\norm{c_n}\neq 0$. Therefore, for $n$ large enough, by usual functional calculus, we can pick a nontrivial projection $d_n\in A_n$ such that $\norm{c_n-d_n}<\frac{1}{4}$. Since $A_n$ is projectionless, this is a contradiction.

In case not all the algebra $A_n$ are projectionless, let 
\[
X=\{n\mid A_n \text{ is projectionless}\}.
\]
Notice that $X$ is infinite. Denote by $p_X$ the sequence in $\prod \mathcal M(A_n)$ which is $1_{\mathcal M(A_n)}$ in case $n\in X$ and $0$ otherwise. Adding to the type the requirement that the realizing element is orthogonal to $1-\dot p_X$ gives a consistent not realized type.

For the second part, choose any sequence of projectionless, nonunital, simple \cstar-algebras from \cite{razak2002classification}. 
\end{proof}

The conclusion of Theorem~\ref{T.DirectSums} may fail if the technical condition that (infinitely many of) the algebras $A_n$ in are projectionless is dropped. For example, if each $A_n$ equals $c_0(\NN)$, then the corona of $\bigoplus A_n$ is naturally isomorphic to $\ell_\infty/c_0$, a countably saturated \cstar-algebra (see the proof of Corollary~\ref{Ex.notsigmacompact}).

\section{Saturation of non-abelian coronas}\label{S.Nonabel}

If $A$ is a \cstar-algebra then $Z(A)$ denotes its center. Since the center is the zero-set of the formula $\varphi(x)=\sup_{y}\norm{xy-yx}$, if $Z(A)$ is not countably saturated, neither is $A$. Furthermore, if the center is not quantifier-free saturated, then the failure of saturation of $A$ is witnessed by a universal type (this is optimal, see Proposition~\ref{P.QFsaturated} below). Suppose that~$A$ is a separable nonunital \cstar-algebra such that $Z(A)=C_0(X)$ for some locally compact space $X$. What can we say in general about $Z(\cQ(A))$? An ability to control $Z(\cQ(A))$ would, together with the results of this note, imply the failure of countable saturation of some corona algebras. A test case: fix a unital \cstar-algebra $B$ with trivial center, and let $X$ be a locally compact noncompact space. Consider $A=C_0(X,B)$. By \cite{akemann1973multipliers}, in this case $\cM(A)\cong C_b(X,B)$, and therefore $Z(\cM(A))\cong C(\beta X)$. By the main result of \cite{vesterstrom1971homomorphic}, we then have $\pi[Z(\cM(A))]=Z(\cQ(A))$, and consequently $Z(\cQ(A))\cong C(\beta X\setminus X)$, if and only if the maximal ideal space of $Z(\cM(A))$ separates that of $\cM(A)$. This reflects in a technical condition on the \cstar-algebra $B$, which we leave open for future investigations.

In the following `$\aleph_2$-saturated' means that every consistent type of cardinality $\aleph_1$ is realized. (According to this convention, `countable saturation' is called `$\aleph_1$-saturation' but we find the former terminology more appealing.) By the classical Hausdorff's gap construction, $\ell_\infty/c_0$ is not quantifier-free $\aleph_2$-saturated. Hausdorff's gap has been transferred to the Calkin algebra in \cite{Za-Av:Gaps} and to the corona of every \cstar-algebra with a sequential approximate unit consisting of projections in \cite[Theorem~14.2.1]{Fa:STCstar}. By a standard argument (e.g., \cite[Lemma~15.3.3]{Fa:STCstar}), this implies that the Calkin algebra is not degree-1 $\aleph_2$-saturated. Similar phaenomena (such as constructions of certain Luzin families) were analyzed for coronas of simple $\sigma$-unital \cstar-algebras in \cite{vaccaro2016obstructions}. Luciano Salvetti proved that the corona of  every separable,  nonunital, \cstar-algebra is not degree-1 $\aleph_2$-saturated, answering a question from the original version of this note.

If $A$ is finite-dimensional, then $A_\infty=C_b([0,\infty), A)/C_0([0,\infty), A)$ is countably saturated. The reason for this is that for every \cstar-algebra $B$, $A\otimes B$ is definable in the theory of  $B$ (the case when $A$ is a full matrix algebra is \cite[Appendix~C]{goldbring2014kirchberg}, also  \cite[Lemma~4.2.4]{Muenster}, and the general case follows easily since $A$ is a direct sum of finitely many full matrix algebras). This implies that every type over $A\otimes B$ can be rewritten as a type over $B$, and the new type is satisfiable (realized) if, and only if, the original one is. (See the discussion of  eq in \cite[\S 3]{Muenster}).) Thus countable saturation of $A_\infty$ follows by \cite[Theorem~1]{FaSh:Rigidity}. These algebras play an important role in the E-theory of Connes and Higson (\cite[\S 25]{blackadar1998k}) and in the Phillips--Weaver construction of an outer automorphism of the Calkin algebra (\cite{PhWe:Calkin}).

\begin{question} \label{Q.Ainfty} Is there an infinite-dimensional \cstar-algebra $A$ such that $A_\infty$ is countably saturated? 
\end{question}

In case $A$ is unital, $A_\infty$ is the corona of a $\sigma$-unital \cstar-algebra, and therefore countably degree-1 saturated. By Corollary~\ref{C1}, an infinite-dimensional \cstar-algebra $A$ such that $A_\infty$ is countably saturated cannot be abelian.

We suspect that the following question has a negative answer. 

\begin{question}\label{Q.simple}
Is there a separable, simple, and nonunital \cstar-algebra $A$ such that $\cQ(A)$ is countably saturated, or at least countably homogeneous? What about the stabilization of the Cuntz algebra $\cO_2$? 
\end{question}

We conclude the paper with an observation loosely related to Corollary~\ref{C00} which answers \cite[Question~6.5]{eagle2015saturation} and a question. 

\begin{proposition} \label{P.QFsaturated} There exists a \cstar-algebra that is countably quantifier-free saturated, but not countably saturated. 
\end{proposition}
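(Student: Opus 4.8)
The plan is to exhibit a single \cstar-algebra $A$ in which every consistent quantifier-free type is realized, yet some consistent type involving a quantifier is omitted. Guided by the discussion preceding the statement, I would engineer $A$ so that its center $Z(A)$ is itself countably saturated; this deliberately blocks the cheap route to a witness, since a non-saturated center would already produce a \emph{universal} type witnessing the failure of saturation, whereas I want the failure to be invisible to quantifier-free conditions. Concretely, I would search for $A$ that is ``locally finite-dimensional'' over a countably saturated abelian center, say $Z(A)\cong\ell_\infty/c_0$, so that quantifier-free types decompose centrally and can be solved using the saturation of $Z(A)$ together with the definability of finite-dimensional amplifications (the same definability phenomenon invoked at the end of \S\ref{S.Nonabel}).

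The first step is to pin down the precise construction and verify countable quantifier-free saturation. I would show that an arbitrary countable quantifier-free type over $A$ reduces, using the central projections supplied by $Z(A)$, to a family of finite-dimensional fiberwise conditions, and that finite satisfiability forces these data to be coherent enough to be patched into a genuine realization. Here the saturation of the center does the patching, while finite-dimensionality guarantees that the relevant zero-sets admit continuous Skolem functions in the sense of Definition~\ref{def:Skolem}, so approximate solutions can be snapped to exact ones. This step I expect to be routine in spirit but delicate in execution.

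The second step is to produce a consistent type that genuinely needs a quantifier and is omitted. Following the mechanism of Proposition~\ref{P.main}, I would use an infinite family of existential conditions, each of the form $\inf_{\bar y}\varphi(\bar y,\bar x)=0$, no finite subfamily of which is obstructed (so that the type is finitely satisfiable), but whose conjunction demands a single element simultaneously witnessing arbitrarily many incompatible constraints. The crucial point is that the corresponding infimum is approached but never attained in $A$: approximate witnesses exist fiberwise, which keeps the quantifier-free shadow satisfiable, while no global witness exists, exactly because attaining the infimum would require the local data to vary in a way incompatible with the homotopy bookkeeping isolated in \S\ref{S.homotopy}.

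The hard part, and the crux of the whole argument, is to make these two requirements coexist in one algebra: the construction must be homogeneous enough that \emph{equational} demands are always met, yet rigid enough that a single \emph{quantified} demand can be defeated. In particular I must certify that the omitted type is not secretly equivalent to a quantifier-free one, since otherwise countable quantifier-free saturation would realize it; this amounts to showing that the non-attainment of the infimum is a genuinely quantifier-level phenomenon rather than a disguised solvable system. I expect this to be where the center-saturation hypothesis and the definable-homotopy analysis of \S\ref{S.homotopy} must be combined most carefully.
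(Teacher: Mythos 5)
There is a genuine gap: your proposal is a research programme rather than a proof. No concrete algebra is ever produced (``I would search for $A$\dots''), and both halves of the argument --- countable quantifier-free saturation via central decomposition and patching, and omission of an existential type via homotopy bookkeeping --- are left as expectations, with the part you yourself call the crux (certifying that the omitted type is not equivalent to a quantifier-free one, and making homogeneity and rigidity coexist) unaddressed. More importantly, the plan rests on a misconception. You insist that $Z(A)$ be countably saturated in order to ``block the cheap route'' through a non-saturated center, on the grounds that such a center would only yield a \emph{universal} type witnessing the failure of saturation. But a universal type is not a quantifier-free type, so that route is not blocked at all: it is precisely the legitimate, and cheapest, route, and it is the one the paper takes. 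The paper's proof takes $A$ the CAR algebra, $D\subseteq A$ a separable infinite-dimensional unital abelian subalgebra, and considers the relative commutant $B=A_\cU\cap D'$ inside the (countably saturated) ultrapower $A_\cU$. Countable quantifier-free saturation of $B$ is essentially free: a countable quantifier-free type over $B$ becomes, after adjoining the quantifier-free conditions $\|xd-dx\|=0$ for $d$ in a countable dense subset of $D$, a countable type over $A_\cU$, which is realized by countable saturation of $A_\cU$ (see \cite[Corollary~16.5.3]{Fa:STCstar}) --- no Skolem functions or central patching needed. The failure of full saturation is then pinned down by computing $Z(B)=D$ from the bicommutant theorem $D''=D$ in $A_\cU$ (\cite[Corollary~2]{farah2016new}): the type consisting of $\sup_y\|xy-yx\|=0$ together with $\|x-d_n\|=1$ for a countable dense sequence $(d_n)$ in $D$ is consistent (since $D$ is infinite-dimensional) but omitted, because any realization would be a central element of $B$, hence an element of $D$, hence at distance $0$ from some $d_n$. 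Since centrality is a sup-condition, this omission is invisible to the quantifier-free fragment, which is all the proposition requires.

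Beyond the strategic misdirection, the steps you do sketch are individually problematic. The patching lemma --- that quantifier-free types over an algebra with center $\ell_\infty/c_0$ reduce to fiberwise finite-dimensional data solvable by saturation of the center --- is nontrivial and unproven, and nothing in the paper supplies it. Worse, the definable-homotopy machinery of \S\ref{S.homotopy} that you invoke for the omitted type is developed only relative to the class of \emph{abelian} \cstar-algebras, precisely because the relevant zero-sets fail to be definable in the class of all \cstar-algebras (e.g., $C(S^{n-1})$ for $n\geq 3$ is not weakly semiprojective); your candidate algebra is nonabelian, so Proposition~\ref{P.main} and its supporting lemmas do not apply to it as stated. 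So even granting the construction, the mechanism you propose for defeating a single quantified demand has no foundation in the setting where you place it, whereas the paper's universal-type mechanism sidesteps the entire issue.
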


\begin{proof} Let $A$ be the CAR algebra (or any other separable, simple, unital \cstar-algebra) and let $D$ be the \cstar-algebra of continuous functions on the Cantor space (or any other infinite-dimensional, unital, abelian subalgebra of $A$). Fix a nonprincipal ultrafilter $\cU$ on $\bbN$. Then the norm ultrapower of~$A$, $A_\cU$, is countably saturated (e.g., \cite[Theorem~16.4.1]{Fa:STCstar}). Identify $D$ with its diagonal image in $A_\cU$. By \cite[Corollary~2]{farah2016new}, the bicommutant of $D$ in $A_\cU$ is equal to~$D$. Therefore $Z(A_\cU\cap D')$ is equal to $(A_\cU\cap D')\cap (A_\cU\cap D')'=D$. Since $D$ is separable and infinite-dimensional, the type of a central element of $A_\cU$ at distance 1 from a fixed countable dense subset of $D$ is countable and approximately satisfiable, but not realized, in $A_\cU\cap D'$. Therefore the \cstar-algebra $A_\cU\cap D'$ is not countably saturated. However, it is easily seen to be countably quantifier-free saturated (see e.g., \cite[Corollary~16.5.3]{Fa:STCstar}). 
\end{proof}

\begin{question}
Is there an abelian \cstar-algebra that is countably quantifier-free saturated, but not countably saturated?
\end{question}

Notably, the only unital abelian \cstar-algebras that admit elimination of quantifiers are $\bbC$, $\bbC^2$, and $C(\text{Cantor space})$ (\cite{eagle2015quantifier}). 

\bibliographystyle{plain} %{amsalpha}
\bibliography{ifmainbib}

\begin{thebibliography}{10}

\bibitem{akemann1973multipliers}
C.A. Akemann, G.K. Pedersen, and J.~Tomiyama.
\newblock Multipliers of \cstar-algebras.
\newblock {\em J. Funct. Anal.}, 13(3):277--301, 1973.

\bibitem{BYBHU}
I.~Ben~Yaacov, A.~Berenstein, C.W. Henson, and A.~Usvyatsov.
\newblock Model theory for metric structures.
\newblock In Z.~Chatzidakis et~al., editors, {\em Model Theory with
  Applications to Algebra and Analysis, Vol. II}, number 350 in London Math.
  Soc. Lecture Notes Series, pages 315--427. London Math. Soc., 2008.

\bibitem{blackadar1998k}
B.~Blackadar.
\newblock {\em K-theory for operator algebras}, volume~5 of {\em MSRI
  Publications}.
\newblock Cambridge University Press, 1998.

\bibitem{Black:Operator}
B.~Blackadar.
\newblock {\em Operator algebras}, volume 122 of {\em Encyclopaedia of
  Mathematical Sciences}.
\newblock Springer-Verlag, Berlin, 2006.
\newblock Theory of \cstar-algebras and von Neumann algebras, Operator Algebras
  and Non-commutative Geometry, III.

\bibitem{CoFa:Automorphisms}
S.~Coskey and I.~Farah.
\newblock Automorphisms of corona algebras, and group cohomology.
\newblock {\em Trans. Amer. Math. Soc.}, 366(7):3611--3630, 2014.

\bibitem{eagle2015saturation}
C.~Eagle and A.~Vignati.
\newblock Saturation and elementary equivalence of {\cstar}-algebras.
\newblock {\em J. Funct. Anal.}, 269(8):2631--2664, 2015.

\bibitem{eagle2015quantifier}
C.~J. Eagle, I.~Farah, E.~Kirchberg, and A.~Vignati.
\newblock Quantifier elimination in \cstar-algebras.
\newblock {\em International Mathematics Research Notices},
  2017(24):7580--7606, 2017.

\bibitem{enders2019almost}
D.~Enders and T.~Shulman.
\newblock Almost commuting matrices, cohomology, and dimension.
\newblock {\em arXiv preprint arXiv:1902.10451}, 2019.

\bibitem{farah2016new}
I.~Farah.
\newblock A new bicommutant theorem.
\newblock {\em Pacific J. Math.}, 288:69--85, 2017.

\bibitem{Fa:STCstar}
I.~Farah.
\newblock {\em Combinatorial Set Theory and \cstar-algebras}.
\newblock Springer Monographs in Mathematics. Springer, 2019.

\bibitem{farah2019between}
I.~Farah.
\newblock Between ultrapowers and asymptotic sequence algebras.
\newblock {\em JEMS}, to appear.

\bibitem{farah2022corona}
I.~Farah, S.~Ghasemi, A.~Vaccaro, and A.~Vignati.
\newblock Corona rigidity.
\newblock {\em arXiv preprint arXiv:2201.11618}, 2022.

\bibitem{FaHa:Countable}
I.~Farah and B.~Hart.
\newblock Countable saturation of corona algebras.
\newblock {\em C.R. Math. Rep. Acad. Sci. Canada}, 35:35--56, 2013.

\bibitem{Muenster}
I.~Farah, B.~Hart, M.~Lupini, L.~Robert, A.~Tikuisis, A.~Vignati, and
  W.~Winter.
\newblock Model theory of \cstar-algebras.
\newblock {\em Mem. Amer. Math. Soc.}, 271(1324):viii+127, 2021.

\bibitem{FaHaSh:Model2}
I.~Farah, B.~Hart, and D.~Sherman.
\newblock Model theory of operator algebras {II}: Model theory.
\newblock {\em Israel J. Math.}, 201:477--505, 2014.

\bibitem{farah2015calkin}
I.~Farah and I.~Hirshberg.
\newblock The {C}alkin algebra is not countably homogeneous.
\newblock {\em Proc. Amer. Math. Soc.}, 144(12):5351--5357, 2016.

\bibitem{FaSh:Rigidity}
I.~Farah and S.~Shelah.
\newblock Rigidity of continuous quotients.
\newblock {\em J. Math. Inst. Jussieu}, 15(01):1--28, 2016.

\bibitem{Gha:FDD}
S.~Ghasemi.
\newblock Isomorphisms of quotients of {FDD}-algebras.
\newblock {\em Israel J. Math.}, 209(2):825--854, 2015.

\bibitem{ghasemi2016reduced}
S.~Ghasemi.
\newblock Reduced products of metric structures: a metric {F}eferman--{V}aught
  theorem.
\newblock {\em J. Symb. Log.}, 81(3):856--875, 2016.

\bibitem{ghasemi2016extension}
S.~Ghasemi and P.~Koszmider.
\newblock An extension of compact operators by compact operators with no
  nontrivial multipliers.
\newblock {\em Journal of Noncommutative Geometry}, 12(4):1503--1529, 2018.

\bibitem{goldbring2014kirchberg}
I.~Goldbring and T.~Sinclair.
\newblock On {K}irchberg's embedding problem.
\newblock {\em J. Funct. Anal.}, 269:155--198, 2015.

\bibitem{Hatcher}
A.~Hatcher.
\newblock {\em Algebraic topology}.
\newblock Cambridge University Press, Cambridge, 2002.

\bibitem{Kirc:Central}
E.~Kirchberg.
\newblock Central sequences in \cstar-algebras and strongly purely infinite
  algebras.
\newblock In {\em Operator Algebras: The Abel Symposium 2004}, volume~1 of {\em
  Abel Symp.}, pages 175--231. Springer, Berlin, 2006.

\bibitem{Lo:Lifting}
T.A. Loring.
\newblock {\em Lifting solutions to perturbing problems in {$C\sp
  *$}-algebras}, volume~8 of {\em Fields Institute Monographs}.
\newblock American Mathematical Society, Providence, RI, 1997.

\bibitem{mckenney2018forcing}
P.~McKenney and A.~Vignati.
\newblock Forcing axioms and coronas of {$\mathrm{C}^*$}-algebras.
\newblock {\em J. Math. Log.}, 21(2):Paper No. 2150006, 73, 2021.

\bibitem{PhWe:Calkin}
N.C. Phillips and N.~Weaver.
\newblock The {C}alkin algebra has outer automorphisms.
\newblock {\em Duke Math. Journal}, 139:185--202, 2007.

\bibitem{razak2002classification}
S.~Razak.
\newblock On the classification of simple stably projectionless
  \cstar-algebras.
\newblock {\em Canadian J. Math.}, 54(1):138--224, 2002.

\bibitem{sakai1971derivations}
S.~Sakai.
\newblock Derivations of simple \cstar-algebras, {III}.
\newblock {\em Tohoku Math. Journal}, 23(3):559--564, 1971.

\bibitem{sorensen2012characterization}
A.P.W. S{\o}rensen and H.~Thiel.
\newblock A characterization of semiprojectivity for commutative
  \cstar-algebras.
\newblock {\em Proc. London Math. Soc.}, 105(5):1021--1046, 2012.

\bibitem{vaccaro2016obstructions}
A.~Vaccaro.
\newblock {Obstructions to lifting abelian subalgebras of corona algebras}.
\newblock {\em {Pac. J. Math.}}, 302(1):293--307, 2019.

\bibitem{vesterstrom1971homomorphic}
J.~Vesterstr{\o}m.
\newblock On the homomorphic image of the center of a \cstar-algebra.
\newblock {\em Math. Scand.}, 29(1):134--136, 1971.

\bibitem{vignati2017nontrivial}
A.~Vignati.
\newblock Nontrivial homeomorphisms of \v{C}ech--{S}tone remainders.
\newblock {\em M\"unster J. Math.}, 10(1):189--200, 2017.

\bibitem{vignati2018rigidity}
A.~Vignati.
\newblock Rigidity conjectures.
\newblock {\em Ann. Sci. \'Ec. Norm. Super.}, to appear.

\bibitem{Za-Av:Gaps}
B.~Zamora-Aviles.
\newblock Gaps in the poset of projections in the {C}alkin algebra.
\newblock {\em Israel Journal of Mathematics}, 202(1):105--115, 2014.

\end{thebibliography}
\end{document}